\documentclass[08pt]{article}
\usepackage{graphicx} 
\usepackage[T1]{fontenc}
\usepackage[utf8]{inputenc}
\usepackage{lmodern}
\usepackage[a4paper,margin=1.5cm]{geometry}
\usepackage{amsmath,amssymb,amsfonts,amsthm}
\usepackage{bm}
\usepackage{hyperref}
\usepackage{mathtools}\usepackage{booktabs}
\usepackage{array}
\usepackage{multirow}
\usepackage{graphicx}
\usepackage{placeins}
\usepackage{float}

\theoremstyle{definition}
\newtheorem{definition}{Definition}[section]
\newtheorem{problem}[definition]{Problem}
\newtheorem{remark}[definition]{Remark}

\theoremstyle{plain}

\newtheorem{lemma}[definition]{lemma}
\usepackage{tikz-cd}
\usepackage{siunitx}
\usepackage{algorithm}
\usepackage{algpseudocode}
\usepackage{subcaption}
\usepackage{graphicx}
\usepackage{booktabs}
\usepackage{caption}
\usepackage{comment}

\usepackage{authblk}
\usepackage{hyperref}

\title{Stochastic Galerkin and Monte-Carlo methods for parabolic problems: Numerical performance of variational matrix-free approximations}
\author[1]{Moataz Dawor\thanks{Corresponding author: \href{mailto:daworm@hsu-hh.de}{daworm@hsu-hh.de}}}
\author[2]{Nils Margenberg}
\author[1]{Markus Bause}

\affil[1]{Helmut Schmidt University, University of the Federal Armed Forces Hamburg, Germany}
\affil[2]{University of Magdeburg, Institute for Analysis and Numerics, Magdeburg, Germany}

\begin{document}

\maketitle

\begin{abstract}
Stochastic Galerkin methods offer unexplored potential for the numerical simulation of parabolic problems with random variables, in particular if they are combined with variational discretizations of the space and time variables. Due to the high dimensionality, the solution of the arising algebraic systems do not become feasible without efficient solvers, preconditioners, and software architectures. A stochastic Galerkin discretization with an embedded slabwise finite element approximation of the space and time variables is proposed and analyzed numerically. For solving the linear systems, GMRES iterations are block-preconditioned by a geometric multigrid (GMG) technique using a local Vanka smoother for the space-time subsystems. Monte-Carlo methods are also used for solving random parabolic problems and studied here for the purpose of comparison. The Monte-Carlo approach is built on the space-time finite element formulation together with the GMRES-GMG solver technology. All algorithms have been implemented in a unified matrix-free framework based on the deal.II software library. Comparative numerical evaluations illustrate the performance properties of both approaches, including convergence of the discretizations and statistics of the algebraic solver. Superiority of the stochastic Galerkin approach is observed.
\end{abstract}

\section{Introduction}

Parabolic partial differential equations with random input data appear naturally when diffusion coefficients, source terms, or initial states are known only statistically. In such situations the numerical task is not only to approximate one solution, but to approximate statistical quantities of the whole random solution field, such as its mean and variance. Two computational strategies are especially natural for this purpose.

The first one is the Monte-Carlo method. It keeps the deterministic solver unchanged, draws independent samples of the random input, solves one deterministic parabolic problem for each realization, and forms empirical estimators of the desired quantities of interest. This makes Monte-Carlo robust, non-intrusive, and straightforward to parallelize. Its main drawback is also well known: the sampling error decays only with the root-sampling rate \(O(N_{\mathrm{MC}}^{-1/2})\), independently of how smooth the solution may be with respect to the random variables; see \cite{Caflisch1998MC,Sullivan2015UQ,Xiu2010Book}.

The second strategy is the intrusive stochastic Galerkin method. For Gaussian random variables, the stochastic dependence can be represented in an orthonormal Hermite polynomial chaos basis. Instead of solving many independent sample problems, one solves a coupled deterministic system for the retained Hermite modes. This coupling makes the method intrusive, but it also allows the method to exploit regularity in the stochastic space. In particular, if the solution has smooth or finite polynomial dependence on the random variables, the stochastic Galerkin approximation can converge much faster than standard sampling methods; see \cite{GhanemSpanos1991,XiuKarniadakis2002,Xiu2010Book,CohenDeVoreSchwab2010,SchwabGittelson2011,HoangSchwab2013}.

The aim of this work is to compare Monte-Carlo and stochastic Galerkin approximations for a stochastic heat equation in a setting where both methods use the same deterministic space-time finite element discretization. Thus, the comparison isolates the treatment of the stochastic variables: Monte-Carlo uses independent pathwise solves, whereas stochastic Galerkin solves one coupled tensor-product system for the retained Hermite modes.

The coupled stochastic Galerkin systems are large, since each time slab contains spatial finite element degrees of freedom, local time degrees of freedom, and stochastic chaos modes. We therefore avoid assembling the full coupled matrix and use a matrix-free operator application exploiting the tensor-product structure in space, time, and stochastic variables. The resulting systems are solved by flexible GMRES with stochastic block-Jacobi preconditioning, where each diagonal stochastic block is treated by the deterministic space-time geometric multigrid solver. This combines stochastic Galerkin block preconditioning \cite{RosseelVandewalle2010,Ullmann2010,BespalovLoghinYoungnoi2021} with matrix-free finite element technology in \texttt{deal.II} \cite{dealII97,kronbichlerKormann2012}.

The numerical study is based on a finite-chaos analytical prescribed solution. Since the exact Hermite structure is known, the stochastic Galerkin approximation has a clear expected behaviour: for \(p_\xi<q\), stochastic truncation error remains, whereas at \(p_\xi=q\) all exact stochastic modes are contained in the discrete space and the remaining error is due to the deterministic discretization and algebraic tolerance. The same benchmark also allows the Monte-Carlo error to be split into pathwise discretization and pure sampling contributions.

The paper is organized as follows. Section~\ref{sec:problem_statement} introduces the model problem and the stochastic Galerkin space-time discretization in section~\ref{sec:SG_discretization}. Section~\ref{sec:manufactured-benchmarks} presents the stochastic Galerkin results, including stochastic refinement, finite-chaos capture, solver statistics, and timing data. Section~\ref{subsec:mc-sampling-rate-diagnostics} summarizes the Monte-Carlo method and the corresponding error decomposition. The Monte-Carlo experiments and the direct work--accuracy comparison with stochastic Galerkin are given in Section~\ref{sec:comparison}.

\section{Problem statement}
\label{sec:problem_statement}
We begin by introducing the heat equation with random inputs and the functional setting in which it is posed. Let \(D\subset\mathbb R^d\), with \(d\in \{2,3\}\), be a bounded Lipschitz domain, let \(I\coloneqq (0,T]\) for some \(T>0\), and let \((\Omega,\mathcal F,\mathbb P)\) be a complete probability space. We consider a random vector
\begin{equation}
    \bm{\xi}=(\xi_1,\dots,\xi_M):\Omega\to\Gamma=\mathbb R^M, \qquad \xi_m\sim\mathcal N(0,1),\qquad m=1,\dots,M \ ; \qquad \rho(\bm{\xi}) = \prod_{m=1}^M \frac{1}{\sqrt{2\pi}}e^{-\xi_m^2/2},
    \label{eq:random_varibale_law}
\end{equation}
where \(M\in\mathbb N\) denotes the number of random input variables in the model. We assume that the components \(\xi_1,\dots,\xi_M\) are independent and identically distributed standard Gaussian random variables, and $\rho(\bm{\xi})$ is the law of \(\bm{\xi}\), which admits the density with respect to the Lebesgue measure on \(\Gamma=\mathbb R^M\).\\

\begin{remark}
In this work we restrict ourselves to a finite-dimensional Gaussian setting with fixed \(M\). More general situations, such as dependent, non-identically distributed, or infinite-dimensional random inputs, are also of interest, but lie outside the scope of the present study.\\
\end{remark}

We consider the heat equation with random inputs:
\begin{align}
\partial_t u(\bm{x},t,\bm{\xi})
-\nabla\cdot\!\bigl(a(\bm{x},\bm{\xi})\nabla u(\bm{x},t,\bm{\xi})\bigr)
&=f(\bm{x},t,\bm{\xi})
&&\text{in } D\times I\times \Gamma,
\label{eq:heat_strong}
\\
u(\bm{x},t,\bm{\xi})&=0
&&\text{on } \partial D\times I\times \Gamma,
\label{eq:heat_bc}
\\
u(0,\bm{x},\bm{\xi})&=u_0(\bm{x},\bm{\xi})
&&\text{in } D\times \Gamma.
\label{eq:heat_ic}
\end{align}

Equivalently, upon viewing \(\bm{\xi}\) as a parameter, the problem may be interpreted as a deterministic parametric PDE posed on the product domain $D\times I\times \Gamma.$

\subsection{Deterministic spaces, stochastic Bochner spaces, and norms}

We now introduce the continuous function spaces underlying the weak formulation and indicate how they relate to the discrete approximation used later. In the present work, we employ a conforming finite element discretization in space, a discontinuous Galerkin discretization in time, and a stochastic Galerkin approximation in a finite-dimensional Hermite chaos space. Accordingly, the functional setting must simultaneously account for spatial weak derivatives, weak time derivatives, and square-integrability with respect to the probability density of the random inputs.

\medskip
\noindent
\textbf{Deterministic spatial spaces.}
We define $H:=L^2(D)$, $V:=H_0^1(D)$, $V':=H^{-1}(D)$. Thus \(H\) is the basic state space, \(V\) is the natural energy space for the diffusion operator, and \(V'\) is its dual space. The norm on \(V\) is given by $\|v\|_V:=\|\nabla v\|_{L^2(D)}$, which, by Poincar\'e's inequality, is equivalent to the full \(H^1(D)\)-norm on \(H_0^1(D)\). Hence \(\|\cdot\|_V\) is the natural energy norm associated with the elliptic part of the heat equation.

\medskip
\noindent
\textbf{Stochastic Bochner spaces.}
Let \(X\) be a separable Hilbert space. Since the random vector \(\bm{\xi}\) has density \(\rho\) on \(\Gamma\), the natural stochastic analogue of \(L^2\) is the weighted Bochner space
\[
L_\rho^2(\Gamma;X)
:=
\left\{
w:\Gamma\to X\ \text{strongly measurable}:
\int_\Gamma \|w(\bm{\xi})\|_X^2\,\rho(\bm{\xi})\,\mathrm d\bm{\xi}<\infty
\right\}.
\]
This space contains \(X\)-valued random fields with finite second moment with respect to the measure \(\rho(\bm{\xi})\,\mathrm d\bm{\xi}\). It is equipped with the inner product
\[
\langle w,z \rangle_{L_\rho^2(\Gamma;X)}
=
\int_\Gamma  \langle w(\bm{\xi}),z(\bm{\xi}) \rangle_X\,\rho(\bm{\xi})\,\mathrm d\bm{\xi}
=
\mathbb E_\rho\!\bigl[\langle w,z \rangle_X\bigr],
\ \text{ where } \
\mathbb E_\rho[g]
:=
\int_\Gamma g(\bm{\xi})\,\rho(\bm{\xi})\,\mathrm d\bm{\xi}.
\]
The induced norm is
\(
\|w\|_{L_\rho^2(\Gamma;X)}
=
\left(
\int_\Gamma \|w(\bm{\xi})\|_X^2\,\rho(\bm{\xi})\,\mathrm d\bm{\xi}
\right)^{1/2}.
\)
In particular, choosing \(X=H\) or \(X=V\) allows us to measure random fields in the state norm or in the energy norm, respectively.

\medskip
\noindent
\textbf{Time-dependent random fields.}
For random fields depending on both time and the stochastic variable, we use
\[
L_\rho^2(\Gamma;L^2(I;X))
=
\left\{
w:\Gamma\to L^2(I;X):
\int_\Gamma\int_0^T \|w(t,\bm{\xi})\|_X^2\,\mathrm dt\,\rho(\bm{\xi})\,\mathrm d\bm{\xi}<\infty
\right\}.
\]
Its norm is
\[
\|w\|_{L_\rho^2(\Gamma;L^2(I;X))}
=
\left(
\int_\Gamma\int_0^T \|w(t,\bm{\xi})\|_X^2\,\mathrm dt\,\rho(\bm{\xi})\,\mathrm d\bm{\xi}
\right)^{1/2}.
\]
By Fubini's theorem, this space may equivalently be viewed as \(L^2(I;L_\rho^2(\Gamma;X))\). This identification will be used repeatedly in the variational formulation and in the error analysis.

\medskip
\noindent
\textbf{Continuous parabolic solution space.}
The natural solution space for the weak form of the random heat equation is
\[
\mathcal X
:=
\Bigl\{
w\in L_\rho^2(\Gamma;L^2(I;V))
:
\partial_t w\in L_\rho^2(\Gamma;L^2(I;V'))
\Bigr\}.
\]
This is the standard parabolic graph space: the first term requires finite spatial energy, while the second requires the time derivative only in the dual space \(V'\), which is the natural regularity level for weak solutions of parabolic problems. We equip \(\mathcal X\) with the graph norm
\begin{equation}
\|w\|_{\mathcal X}^2
:=
\|w\|_{L_\rho^2(\Gamma;L^2(I;V))}^2
+
\|\partial_t w\|_{L_\rho^2(\Gamma;L^2(I;V'))}^2
+
\|w(0)\|_{L_\rho^2(\Gamma;H)}^2.
\label{eq:X_norm}
\end{equation}
The initial trace \(w(0)\in L_\rho^2(\Gamma;H)\) is included because the heat equation is an evolution problem and the initial value enters the weak formulation explicitly; see \cite{Thomee2006,BauseRaduKoecher2017}.

\medskip
\noindent
\textbf{Discrete-in-time viewpoint and broken norm.}
Later, the time variable will be discretized by a discontinuous Galerkin method on a partition
\[
0=t_0<t_1<\cdots<t_N=T,\qquad I_n:=(t_{n-1},t_n],\qquad \tau_n:=t_n-t_{n-1}.
\]
Since dG functions are only piecewise polynomial in time and may be discontinuous at the nodes \(t_n\), the continuous graph norm \(\|\cdot\|_{\mathcal X}\) is no longer the appropriate stability norm on the discrete level. For this reason, we introduce the broken-in-time dG norm; see \cite{SchotzauSchwab2000,BauseKoecher2015}.
\begin{equation}
\|w\|_{\mathrm{dG}}^2
:=
\sum_{n=1}^{N}
\|w\|_{L_\rho^2(\Gamma;L^2(I_n;V))}^2
+
\sum_{n=1}^{N-1}\|[w]_n\|_{L_\rho^2(\Gamma;H)}^2
+
\|w(0^+)\|_{L_\rho^2(\Gamma;H)}^2, \ [w]_n:=w(t_n^+)-w(t_n^-)
\label{eq:dg_energy_norm}
\end{equation}
denotes the jump across the interface \(t_n\).\\

This norm is the natural discrete counterpart of the continuous parabolic norm. The first term measures the spatial energy on each time slab, the jump terms penalize inter-slab discontinuities, and the last term controls the incoming initial value on the first slab. In particular, if a function is continuous in time, then all jumps vanish and the broken norm reduces to the slabwise energy contribution together with the initial trace term.

\medskip
\begin{remark}
The continuous spaces introduced above are chosen precisely so that the later discrete spaces fit into them naturally. Indeed, the conforming spatial finite element space \(V_h\) satisfies \(V_h\subset V\), so spatially discrete functions inherit the same \(H\)- and \(V\)-norms as the continuous problem. Likewise, the stochastic Galerkin space \(\mathcal{S}_{p_\xi}\subset L_\rho^2(\Gamma)\) inherits the stochastic norm from \(L_\rho^2(\Gamma)\). On the other hand, the discontinuous Galerkin time space consists of functions that are only piecewise polynomial in time, so these functions need not belong to \(\mathcal X\) globally; this is exactly why the broken norm \(\|\cdot\|_{\mathrm{dG}}\) is needed.
\end{remark}

Consequently, the continuous norm \(\|\cdot\|_{\mathcal X}\) governs the weak formulation and continuous well-posedness, while the broken norm \(\|\cdot\|_{\mathrm{dG}}\) governs the stability and error analysis of the fully discrete space-time approximation. In this sense, the discrete norms are not ad hoc: they are designed to mirror the structure of the continuous parabolic problem as closely as possible while accounting for the time discontinuities introduced by the dG discretization.\\

\subsection{Weak formulation}

We now formulate the random heat equation in weak form. Since the spatial operator is elliptic supplemented with homogeneous Dirichlet boundaries, the natural energy space is \(V=H_0^1(D)\), while the time derivative is understood in the dual space \(V'=H^{-1}(D)\), \textit{cf.} \cite{Thomee2006,BauseRaduKoecher2017}.

To ensure well-posedness, we assume that the diffusion coefficient is essentially bounded and uniformly elliptic. More precisely, we assume that $a\in L^\infty(D\times\Gamma)$ and that there exist deterministic constants \(a_{\min},a_{\max}>0\) such that
\begin{equation}
0<a_{\min}\le a(\bm x,\bm\xi)\le a_{\max}<\infty
\qquad
\text{for a.e. }(\bm x,\bm\xi)\in D\times\Gamma
\label{eq:uniform_ellipticity}
\end{equation}
with respect to the product measure \(d\bm x\otimes \rho(\bm\xi)\,d\bm\xi\). Thus the pathwise diffusion operator is uniformly elliptic for \(\mathbb P\)-almost every realization.

For each fixed \(\bm\xi\in\Gamma\), we define the bilinear form
\begin{equation}\label{diff_bilinear}
\mathfrak a(\bm\xi;w,v)
:=
\int_D a(\bm x,\bm\xi)\,\nabla w(\bm x)\cdot\nabla v(\bm x)\,\mathrm d\bm x,
\qquad w,v\in V.
\end{equation}
By \eqref{eq:uniform_ellipticity}, this bilinear form is bounded and coercive uniformly in \(\bm\xi\), namely $|\mathfrak a(\bm\xi;w,v)| \le a_{\max}\|w\|_V\|v\|_V$, $\mathfrak a(\bm\xi;v,v) \ge a_{\min}\|v\|_V^2$ \ , \textit{cf.} \cite{Sullivan2015UQ,Thomee2006,NobileTempone2009,BauseRaduKoecher2017}. These estimates are the key ingredients for the stochastic parabolic well-posedness in the space
\[
\mathcal X
=
\bigl\{
w\in L_\rho^2(\Gamma;L^2(I;V))
:
\partial_t w\in L_\rho^2(\Gamma;L^2(I;V'))
\bigr\}.
\]

\begin{problem}[Weak stochastic heat problem]
\label{prob:weak-stochastic-heat}
Given $u_0\in L_\rho^2(\Gamma;H)$ and $f\in L_\rho^2(\Gamma;L^2(I;V'))$,
find $u\in \mathcal X$ with $u(0)=u_0$ in $L_\rho^2(\Gamma;H)$
such that
\begin{equation}
\begin{aligned}
\int_\Gamma\int_0^T
\Bigl(
\langle \partial_t u(\bm{\cdot},t,\bm{\xi}),v(\bm{\cdot},t,\bm{\xi})\rangle_{V',V}
& +
\mathfrak a(\bm\xi;u(\bm{\cdot},t,\bm{\xi}),v(\bm{\cdot},t,\bm{\xi}))
\Bigr)
\,\mathrm dt\,\rho(\bm\xi)\,\mathrm d\bm\xi\\
& =
\int_\Gamma\int_0^T
\langle f(\bm{\cdot},t,\bm{\xi}),v(\bm{\cdot},t,\bm{\xi})\rangle_{V',V}
\,\mathrm dt\,\rho(\bm\xi)\,\mathrm d\bm\xi
\end{aligned}
\label{eq:weak_problem}
\end{equation}
for all test functions $v\in L_\rho^2(\Gamma;L^2(I;V)).$
\end{problem}

In expectation notation, \eqref{eq:weak_problem} can be written equivalently as
\[
\mathbb E_\rho\!\left[
\int_0^T
\Bigl(
\langle \partial_t u,v\rangle_{V',V}
+
\mathfrak a(\bm\xi;u,v)
\Bigr)\,\mathrm dt
\right]
=
\mathbb E_\rho\!\left[
\int_0^T
\langle f,v\rangle_{V',V}\,\mathrm dt
\right].
\]

This formulation is the stochastic counterpart of the classical weak formulation of a deterministic parabolic problem. It combines the weak-in-space, weak-in-time structure of the heat equation with square-integrability in the stochastic variables. Moreover, for \(\rho\)-almost every \(\bm\xi\in\Gamma\), the problem may be interpreted pathwise as a deterministic parabolic PDE in \((\bm{x},t)\). The stochastic weak formulation above is therefore obtained by integrating the pathwise weak formulation over the parameter domain \(\Gamma\) with respect to the probability measure \(\rho(\bm\xi)\,d\bm\xi\).\\

\subsection{Discrete approximation strategy: space-time--stochastic structure}
\label{sec:SG_discretization}

The weak solution of the random heat equation depends on three variables of different analytical nature: the spatial variable \(\bm x\in D\), the time variable \(t\in I\), and the random parameter \(\bm\xi\in\Gamma\). It is therefore natural to approximate the solution by combining three discretization principles, each adapted to one of these variables.

In space, we use a conforming finite element discretization in order to preserve the variational structure of the elliptic operator and the homogeneous Dirichlet boundary condition. In time, we use a discontinuous Galerkin method as in \cite{NMP_hp,NP_parabolic}, which is particularly convenient for parabolic evolution problems because it is local in time, naturally accommodates slabwise solution procedures, and allows for controlled jumps at time interfaces. In the stochastic variable, we use a generalized polynomial chaos approximation in a Hermite basis, which is the canonical orthonormal choice for Gaussian random inputs.

This leads to a fully discrete approximation of tensor-product type, $\text{space} \;\otimes\; \text{time} \;\otimes\; \text{stochastic},$ and, more precisely, to a discrete solution space of the form $X_{h,\tau,p_\xi} = V_h \otimes S_\tau \otimes \mathcal P_{p_\xi}$ as studied, for related space-time and stochastic Galerkin constructions, in \cite{Thomee2006,SchotzauSchwab2000,XiuKarniadakis2002,HoangSchwab2013}.
The purpose of the next three subsections is to define these factors one by one and to explain why they are the natural choices in the present stochastic parabolic setting. This is also the structure underlying the later slabwise stochastic Galerkin formulation and its tensor-product algebraic form.

\subsection{Spatial finite element discretization}
\label{subsec:sg-spatial-discretization}
Let \(\mathcal T_h=\{K\}\) be a shape-regular conforming mesh of \(D\)
consisting of quadrilateral or hexahedral elements. We choose a conforming
finite element space \(V_h\subset V=H_0^1(D)\). For polynomial degree
\(k\in\mathbb N\), we define
\[
V_h
\coloneqq
\left\{
v_h\in C^0(\overline D)\cap H_0^1(D)
:
v_h|_K\in V_k(K)
\quad \forall K\in\mathcal T_h
\right\},
\]
where the local physical-cell space \(V_k(K)\) is defined by the pullback from
the reference cell,
\[
V_k(K)
\coloneqq
\left\{
\widehat v\circ F_K^{-1}
:
\widehat v\in \mathbb Q_k(\widehat K)
\right\}.
\]
Here \(F_K:\widehat K\to K\) denotes the element map from the reference cell
\(\widehat K\) to the physical cell \(K\), and
\(\mathbb Q_k(\widehat K)\) is the tensor-product polynomial space of degree
at most \(k\) in each coordinate.

Thus \(V_h\) consists of globally continuous, piecewise polynomial functions that vanish on \(\partial D\). The inclusion \(V_h\subset H_0^1(D)\) is crucial: it ensures that the spatially discrete problem inherits the same energy structure as the continuous weak formulation.

We fix a basis $V_h=\operatorname{span}\{\phi_m\}_{m=1}^{N_h}$, $\dim V_h=N_h$. In a standard conforming finite element implementation, the functions \(\phi_m\) are the global nodal Lagrange basis functions associated with the chosen mesh and polynomial degree. If one wishes, the local polynomial space on the reference cell may also be viewed from a modal perspective, for example in terms of tensorized Legendre polynomials; however, for the present conforming formulation and its later assembly, the global Lagrange basis is the natural representation.

\begin{remark}
Each coefficient in the expansion
$v_h(\bm x)=\sum_{m=1}^{N_h} V_m \phi_m(\bm x)$ represents one spatial degree of freedom. The basis \(\{\phi_m\}\) will later form the spatial tensor factor in the fully discrete representation of the stochastic space-time solution.
\end{remark}

\subsection{Broken polynomial spaces in time and the \(\mathrm{dG}(r)\) discretization}
\label{subsec:sg-time-discretization}

We partition the time interval \(I=(0,T]\) into slabs
\(0=t_0<t_1<\cdots<t_N=T\), with \(I_n:=(t_{n-1},t_n]\) and
\(\tau_n:=t_n-t_{n-1}\). In the present space-time finite element setting,
the time discretization is based on scalar slabwise polynomials combined with the spatial finite element space. We define
\[
S_\tau := S_\tau^{(r)} :=
\left\{
\theta: [0,T] \to\mathbb R:\;
\theta|_{I_n}\in \mathbb P_r(I_n),
\; \forall n=1,\ldots,N
\right\}.
\]
The corresponding \(V_h\)-valued time-discrete space is represented through the tensor-product identification
\(
T_\tau^{(r)}(V_h)
\cong
S_\tau\otimes V_h .
\)
Thus a discrete space-time function is polynomial in time on every slab, with coefficients in \(V_h\), but it is not required to be continuous across slab interfaces. We therefore use the one-sided traces
\[
\omega(t_n^-):=\lim_{t\nearrow t_n}\omega(t),
\qquad
\omega(t_n^+):=\lim_{t\searrow t_n}\omega(t),
\qquad
[\omega]_n:=\omega(t_n^+)-\omega(t_n^-).
\]
These jumps enter the discontinuous Galerkin formulation and provide the
causal transfer of information from one time slab to the next. On each slab
\(I_n\), we choose a local basis
\(
\{v_i^{(n)}\}_{i=0}^{r}\subset \mathbb P_r(I_n)
\)
for the scalar polynomial space. In the computations, we use the right Gauss--Radau nodal time basis on each slab \(I_n\); see \cite{BauseKoecher2015}. Since this basis includes the right endpoint of the slab, it is convenient for evaluating the outgoing trace and for assembling the endpoint contribution in the \(\mathrm{dG}(r)\) formulation. The resulting slab-local structure is one of the reasons why discontinuous Galerkin time discretizations are well suited to parabolic evolution problems; \textit{cf.} \cite{SchotzauSchwab2000,BauseKoecher2015}.

\subsection{Stochastic discretization by Hermite generalized polynomial chaos}

We now turn to the stochastic variable. The natural ambient space is the weighted Hilbert space
\begin{equation}
L_\rho^2(\Gamma)
:=
\left\{
g:\Gamma\to \mathbb R:\;
\int_\Gamma |g(\bm\xi)|^2\rho(\bm\xi)\,\mathrm d\bm\xi<\infty
\right\},  \ \ \text{equipped with} \ \  \langle g,h\rangle_{L_\rho^2(\Gamma)}
:=
\int_\Gamma g(\bm\xi)h(\bm\xi)\rho(\bm\xi)\,\mathrm d\bm\xi.
\end{equation}
Since the random inputs are independent standard Gaussian variables, the Wiener--Askey scheme shows that the natural generalized polynomial chaos basis is given by Hermite polynomials; see \cite{GhanemSpanos1991,XiuKarniadakis2002,Xiu2010Book}. More precisely, for one standard Gaussian variable \(\xi\sim\mathcal N(0,1)\), we consider the probabilists' Hermite polynomials
\begin{equation}
\mathrm{He}_n(y)
=
(-1)^n e^{y^2/2}\frac{d^n}{dy^n}\bigl(e^{-y^2/2}\bigr),
\qquad
\psi_n(y):=\frac{\mathrm{He}_n(y)}{\sqrt{n!}} \, \qquad n\in\mathbb N_0,
\label{eq:Hermit_def}
\end{equation}
Then \(\{\psi_n\}_{n\ge 0}\) is an orthonormal basis of \(L^2(\mathbb R,\rho)\), that is, \( \int_{\mathbb R}\psi_n(y)\psi_m(y)\rho(y)\,\mathrm dy = \delta_{nm}.\). For the multivariate Gaussian variable \(\bm\xi=(\xi_1,\dots,\xi_M)\), we introduce the tensorized Hermite basis
\[
\Psi_{\bm{\alpha}}(\bm\xi)
:=
\prod_{m=1}^M \psi_{\alpha_m}(\xi_m),
\qquad
\bm{\alpha}=(\alpha_1,\dots,\alpha_M)\in\mathbb N_0^M,
\
\ \text{We also write } \
|\bm{\alpha}|_1:=\alpha_1+\cdots+\alpha_M \ ,
\ \ 
\bm{\alpha}!
:=
\prod_{m=1}^{M}\alpha_m!
\]
Because the variables \(\xi_1,\dots,\xi_M\) are independent and identically distributed, the product structure of the density implies \(\rho(\bm\xi)=\prod_{m=1}^M \rho(\xi_m),\) and therefore the orthonormality of the multivariate basis factorizes into the one-dimensional orthonormality relations:
\begin{equation}
\mathbb E_\rho[\Psi_{\bm{\alpha}}\Psi_{\bm{\beta}}]
=
\int_\Gamma \Psi_{\bm{\alpha}}(\bm\xi)\Psi_{\bm{\beta}}(\bm\xi)\rho(\bm\xi)\,\mathrm d\bm\xi
=
\prod_{m=1}^M \int_{\mathbb R}\psi_{\alpha_m}(y)\psi_{\beta_m}(y)\rho(y)\,\mathrm dy
=
\delta_{\bm{\alpha}\bm{\beta}}.
\label{eq:hermite-orthonormality}
\end{equation}
Hence \(\{\Psi_{\bm{\alpha}}\}_{\bm{\alpha}\in\mathbb N_0^M}\) forms an orthonormal basis of \(L_\rho^2(\Gamma)\).

The reason for choosing Hermite chaos is exactly this orthonormality with respect to the multivariate Gaussian density \(\rho\): it makes the stochastic approximation compatible with the underlying probability law and leads to a particularly clean Galerkin structure. In particular, the stochastic mass matrix becomes the identity, while the remaining stochastic coupling is carried by triple-product matrices that appear later in the tensor-product algebraic formulation; \textit{cf.} \cite{Sullivan2015UQ,XiuKarniadakis2002,Xiu2010Book,HoangSchwab2013}.

For a prescribed stochastic polynomial degree \(p_\xi\in\mathbb N_0\), we define the total-degree chaos space
\[
\mathcal P_{p_\xi}
:=
\operatorname{span}\{\Psi_{\bm{\alpha}}:\;|\bm{\alpha}|_1\le p_\xi\},
\ \text{ its dimension is: } \
N_\xi(p_\xi)
=
\dim \mathcal P_{p_\xi}
=
\binom{M+p_\xi}{p_\xi}.
\]
For instance, if \(M=4\), then \( N_\xi(0)=1, \ N_\xi(1)=5,\ N_\xi(2)=15,\ N_\xi(3)=35,\ N_\xi(4)=70.\)

\subsection{Fully discrete stochastic space-time finite element space}
\label{subsec:sg-fully-discrete-space}

Combining the spatial, temporal, and stochastic discretizations, we define the fully discrete approximation space by
\begin{equation}
\label{eq:global_disc_solution_space}
X_{h,\tau,p_\xi}
:=
V_h\otimes S_\tau\otimes \mathcal P_{p_\xi}.
\end{equation}
Hence the discrete solution is built from three independent approximation mechanisms: a conforming spatial basis in \(V_h\), a broken polynomial basis in time, and a Hermite chaos basis in the stochastic variable. The ordering in \eqref{eq:global_disc_solution_space} follows the implementation layout used in the code.

\begin{remark}
Let \(\{v_i^{(n)}\}_{i=0}^r\) be the local scalar time basis on \(I_n\), let \(\{\phi_m\}_{m=1}^{N_h}\) be the spatial finite element basis of \(V_h\), and let \(\{\Psi_{\bm{\alpha}}\}_{|\bm{\alpha}|_1\le p_\xi}\) be the stochastic chaos basis of \(\mathcal P_{p_\xi}\). Then every discrete function \(u_{h,\tau,p_\xi}\in X_{h,\tau,p_\xi}\) admits on each slab \(I_n\) the representation
\[
{u_{h,\tau,p_\xi}}\big|_{I_n}(\bm{x},t,\bm{\xi})
=
\sum_{i=0}^r
\sum_{|\bm{\alpha}|_1\le p_\xi}
\sum_{m=1}^{N_h}
U_{i,\bm{\alpha},m}^{(n)}
\,v_i^{(n)}(t)\,
\phi_m(\bm x)\,
\Psi_{\bm{\alpha}}(\bm\xi).
\]
The coefficients \(U_{i,\bm{\alpha},m}^{(n)}\) are precisely the unknowns of the slabwise stochastic Galerkin system.
\end{remark}

\begin{remark}
The space \(X_{h,\tau,p_\xi}\) is the natural discrete counterpart of the continuous stochastic parabolic setting:
\begin{itemize}
\item the factor \(V_h\subset H_0^1(D)\) preserves the spatial energy structure;
\item the factor \(S_\tau\) realizes a slabwise \(\mathrm{dG}(r)\) approximation in time;
\item the factor \(\mathcal P_{p_\xi}\subset L_\rho^2(\Gamma)\) realizes the stochastic Galerkin approximation with respect to the Gaussian law.
\end{itemize}
Accordingly, the later discrete bilinear forms and algebraic matrices inherit a tensor-product structure in space, time, and stochastic modes.
\end{remark}

\subsection{Slabwise dG stochastic Galerkin formulation}
\label{subsec:sg-slabwise-dg-formulation}

We now combine the spatial finite element discretization, the broken-in-time \(\mathrm{dG}(r)\) discretization, and the Hermite generalized polynomial chaos approximation into the fully discrete stochastic Galerkin formulation.

The global discrete trial and test space is \(X_{h,\tau,p_\xi}=V_h\otimes S_\tau\otimes \mathcal P_{p_\xi}\). Thus the discrete solution is sought globally in a broken-in-time tensor-product space. However, because the discontinuous Galerkin formulation in time couples only neighboring slabs through the interface traces, the resulting algebraic system is causal and block lower triangular in time; \textit{cf.} \cite{SchotzauSchwab2000,BauseKoecher2015}. Consequently, although the formulation is global, it can be solved successively slab by slab.

More precisely, on each slab \(I_n=(t_{n-1},t_n]\), we introduce the local discrete space
\(
X_{h,\tau,p_\xi}^{(n)}
:=
\mathbb P_r(I_n;V_h)\otimes \mathcal P_{p_\xi}.
\)
Hence the slabwise unknown \(u_{h,\tau,p_\xi}|_{I_n}\) belongs to the correct local tensor-product space consisting of \(V_h\)-valued polynomials in time on \(I_n\), combined with stochastic chaos modes up to degree \(p_\xi\).\\

To simplify notation, for \(\bm{\xi}\in\Gamma\) and \(w,v\in V_h\), we use bilinear form $\mathfrak a(\bm\xi;w,v)$ as in \ref{diff_bilinear}. Starting from the global broken variational formulation and restricting it to a single slab \(I_n\), one obtains the following causal local problem.

\begin{problem}[Discrete problem on \(I_n\)]
\label{prob:discrete-slab-problem}
Given the incoming trace \(u_{h,\tau,p_\xi}(\bm{x},t_{n-1}^-,\bm{\xi})\),
find \(u_{h,\tau,p_\xi}|_{I_n}\in X_{h,\tau,p_\xi}^{(n)}\) such that for all
\(v_{h,\tau,p_\xi}\in X_{h,\tau,p_\xi}^{(n)}\) there holds
\begin{equation}
\begin{aligned}
&
\mathbb E_\rho\!\left[
\int_{I_n}
\Bigl(
\left\langle
\partial_t u_{h,\tau,p_\xi},\,v_{h,\tau,p_\xi}
\right\rangle_{H}
+
\mathfrak{a}_h(\bm{\xi};u_{h,\tau,p_\xi},v_{h,\tau,p_\xi})
\Bigr)\,\mathrm dt
\right] +
\mathbb E_\rho\!\left[
\left\langle
u_{h,\tau,p_\xi}(\bm{x},t_{n-1}^+,\bm{\xi}),
\,v_{h,\tau,p_\xi}(\bm{x},t_{n-1}^+,\bm{\xi})
\right\rangle_{H}
\right]
\\
&=
\mathbb E_\rho\!\left[
\int_{I_n}
\left\langle
f(\bm{x},t,\bm{\xi}),\,v_{h,\tau,p_\xi}
\right\rangle_{H}
\,\mathrm dt
\right]
+
\mathbb E_\rho\!\left[
\left\langle
u_{h,\tau,p_\xi}(\bm{x},t_{n-1}^-,\bm{\xi}),
\,v_{h,\tau,p_\xi}(\bm{x},t_{n-1}^+,\bm{\xi})
\right\rangle_{H}
\right].
\end{aligned}
\label{eq:slab_discrete_problem}
\end{equation}
\end{problem}

Here the term involving \(u_{h,\tau,p_\xi}(\bm{x},t_{n-1}^+,\bm{\xi})\) belongs to the current slab \(I_n\), whereas the term involving \(u_{h,\tau,p_\xi}(\bm{x},t_{n-1}^-,\bm{\xi})\) transfers the outgoing information from the previous slab \(I_{n-1}\). In this way, the discontinuous Galerkin interface term weakly enforces time propagation from one slab to the next.

On the first slab, there is no previous time interval. Therefore the incoming trace is replaced by the discrete initial datum, $u_{h,\tau,p_\xi}(\bm{x},t_0^-,\bm{\xi})=u_{0,h,p_\xi}(\bm{x},\bm{\xi}),$ where \(u_{0,h,p_\xi}\in V_h\otimes\mathcal P_{p_\xi}\) denotes the chosen projection of the initial value \(u_0\) onto the spatial--stochastic discrete space.

Hence the global broken-space formulation may be interpreted algorithmically as follows: on each slab \(I_n\), one solves for the restriction \(u_{h,\tau,p_\xi}|_{I_n}\in X_{h,\tau,p_\xi}^{(n)}\), using as data either the projected initial condition (for \(n=1\)) or the incoming trace from the previous slab (for \(n\ge 2\)). This causal structure is precisely what later leads to the block lower triangular algebraic system in time.

\subsection{Tensor-product algebraic formulation}

We now derive the algebraic form of the slabwise stochastic Galerkin discretization. The purpose of this step is to make explicit how the fully discrete tensor-product space in (\ref{eq:global_disc_solution_space}) induces a corresponding tensor-product structure in the discrete matrices.

The key observation is that, on each slab \(I_n\), both trial and test functions are expanded in basis functions of the form \(v_i^{(n)}(t)\,\phi_m(\bm x)\,\Psi_{\bm{\alpha}}(\bm\xi),\) that is, as tensor products of a temporal basis function, a spatial finite element basis function, and a stochastic chaos basis function. When these expansions are inserted into the slabwise discrete variational problem, the arising integrals separate into products of temporal, spatial, and stochastic contributions. This is the origin of the Kronecker-product structure in the algebraic system; \textit{cf.} \cite{RosseelVandewalle2010,Ullmann2010,BespalovLoghinYoungnoi2021}.

A further ingredient is the stochastic expansion of the diffusion coefficient. Since the elliptic operator contains \(a(\bm x,\bm\xi)\) multiplicatively, the stochastic Galerkin projection of the diffusion term couples the stochastic modes of the solution with the stochastic modes of the coefficient. To expose this coupling explicitly, we expand the diffusion coefficient in the same Hermite basis as used for the solution.

\paragraph{Expansion of the diffusion coefficient.}
We write
\begin{equation}
a(\bm x,\bm\xi)
=
\sum_{\bm{\mu}\in \mathcal M_a} a_{\bm{\mu}}(\bm x)\Psi_{\bm{\mu}}(\bm\xi),
\label{eq:coeff_expansion}
\end{equation}
where \(\mathcal M_a\subset \mathbb N_0^M\) denotes the active coefficient index set. For the prescribed polynomial finite order diffusion used below, \(\mathcal M_a\) is finite and contains only modes of total degree at most \(2\).

\begin{remark}
It is important to distinguish between the stochastic index set of the discrete solution and the active stochastic modes of the diffusion coefficient.

The approximation space for the solution is determined by the truncation degree \(p_\xi\), hence by the set
\[
\Lambda_{p_\xi}
=
\bigl\{
\bm{\alpha}\in\mathbb N_0^M:\ |\bm{\alpha}|_1\le p_\xi
\bigr\},
\]
whereas the coefficient expansion involves the set \(\mathcal M_a\) in \eqref{eq:coeff_expansion}. These two sets play different roles: \(\Lambda_{p_\xi}\) determines the unknown stochastic degrees of freedom of the discrete solution, while \(\mathcal M_a\) determines which stochastic coupling matrices appear in the operator.

In implementations, one uses the chosen solution truncation \(\Lambda_{p_\xi}\) for the unknowns, while \(\mathcal M_a\) is either known exactly from the coefficient model or obtained from a separate truncation of the input field. The operator assembly then requires only those modes \(\bm{\mu}\in\mathcal M_a\) for which \(a_{\bm{\mu}}\neq 0\).
\end{remark}

\paragraph{Coefficient expansion of the discrete solution.}
On each slab \(I_n\), the discrete solution is written as
\begin{equation}
{u_{h,\tau,p_\xi}}\big|_{I_n}(\bm{x},t,\bm{\xi})
=
\sum_{i=0}^{r}
\sum_{\bm{\alpha}\in\Lambda_{p_\xi}}
\sum_{m=1}^{N_h}
U_{i,\bm{\alpha},m}^{(n)}
\,v_i^{(n)}(t)\,\phi_m(\bm x)\,\Psi_{\bm{\alpha}}(\bm\xi).
\label{eq:discrete_expansion}
\end{equation}
We collect the coefficients \(U_{i,\bm{\alpha},m}^{(n)}\) into a vector \(U^{(n)}\in\mathbb R^{(r+1)N_\xi N_h}, \ \ N_\xi:=\dim \mathcal P_{p_\xi}, \)
ordered by the triple index \((i,\bm{\alpha},m)\).
\paragraph{Temporal matrices on \(I_n\).}
The time discretization contributes the following matrices. The matrix
\begin{align}
[A_t^{(n)}]_{ij}
&:=
\int_{I_n} \partial_t v_j^{(n)}(t)\,v_i^{(n)}(t)\,\mathrm dt
+
v_j^{(n)}(t_{n-1}^+)\,v_i^{(n)}(t_{n-1}^+)
\label{eq:At}
\end{align}
is the \emph{local dG time-evolution matrix}; it contains both the time-derivative term and the current-slab interface contribution.

The matrix
\begin{align}
[B_t^{(n)}]_{ij}
&:=
\int_{I_n} v_j^{(n)}(t)\,v_i^{(n)}(t)\,\mathrm dt
\label{eq:Bt}
\end{align}
is the \emph{local temporal mass matrix}; it weights the spatial stiffness contributions over the slab.

For \(n\ge 2\), the matrix
\begin{align}
[C_t^{(n)}]_{ij}
&:=
v_j^{(n-1)}(t_{n-1}^-)\,v_i^{(n)}(t_{n-1}^+)
\label{eq:Ct}
\end{align}
is the \emph{inter-slab transfer matrix}; it transports the outgoing trace from slab \(I_{n-1}\) to the incoming trace on slab \(I_n\).

\paragraph{Stochastic matrices.}
The stochastic Galerkin projection yields the matrices
\begin{align}
[M_{\bm{\xi}}]_{\bm{\alpha}\bm{\beta}}
:=
\int_\Gamma
\Psi_{\bm{\alpha}}(\bm\xi)\Psi_{\bm{\beta}}(\bm\xi)\rho(\bm\xi)\,\mathrm d\bm\xi
=
\delta_{\bm{\alpha}\bm{\beta}} \ ,
\qquad
[G_{\bm{\mu}}]_{\bm{\alpha}\bm{\beta}} :=
\int_\Gamma
\Psi_{\bm{\mu}}(\bm\xi)\Psi_{\bm{\alpha}}(\bm\xi)\Psi_{\bm{\beta}}(\bm\xi)
\rho(\bm\xi)\,\mathrm d\bm\xi.
\label{eq:triple_stoch}
\end{align}
Here \(M_{\bm{\xi}}\) is the \emph{stochastic mass matrix}; due to the orthonormality of the Hermite basis, it is the identity matrix. The matrices \(G_{\bm{\mu}}\) are the \emph{stochastic triple-product coupling matrices}; they encode how the stochastic modes of the diffusion coefficient couple the stochastic modes of the discrete solution.

\begin{remark}[Factorization of stochastic couplings]
The independence and tensor-product structure also imply a factorization of the stochastic coupling coefficients. For multi-indices \(\bm{\mu},\bm{\alpha},\bm{\beta}\in\mathbb N_0^M\), define
\[
[G_{\bm{\mu}}]_{\bm{\alpha}\bm{\beta}}
=
\int_\Gamma
\Psi_{\bm{\mu}}(\bm\xi)\Psi_{\bm{\alpha}}(\bm\xi)\Psi_{\bm{\beta}}(\bm\xi)\,
\rho(\bm\xi)\,\mathrm d\bm\xi
=
\mathbb E_\rho\!\left[
\Psi_{\bm{\mu}}(\bm\xi)\Psi_{\bm{\alpha}}(\bm\xi)\Psi_{\bm{\beta}}(\bm\xi)
\right].
\]
Since the variables are i.i.d. and the basis is tensorized, one obtains
\[
[G_{\bm{\mu}}]_{\bm{\alpha}\bm{\beta}}
=
\prod_{m=1}^M
\mathbb E\!\left[
\psi_{\mu_m}(\xi_m)\psi_{\alpha_m}(\xi_m)\psi_{\beta_m}(\xi_m)
\right].
\]
This factorization is fundamental for the efficient construction of the stochastic Galerkin matrices.
\end{remark}

\paragraph{Spatial matrices.}
The spatial finite element discretization contributes
\begin{align}
[M_{\bm{x}}]_{m\ell} :=
\left\langle \phi_\ell,\phi_m\right\rangle_{H} \ , \qquad 
[K_{\bm{\mu}}]_{m\ell} :=
\int_D
a_{\bm{\mu}}(\bm x)\,\nabla\phi_\ell(\bm x)\cdot\nabla\phi_m(\bm x)\,\mathrm d\bm x.
\end{align}
Thus \(M_{\bm{x}}\) is the \emph{spatial mass matrix}, while \(K_{\bm{\mu}}\) is the \emph{coefficient-weighted spatial stiffness matrix} associated with the stochastic coefficient mode \(a_{\bm{\mu}}(\bm x)\).

\paragraph{Load vector on \(I_n\).}
The slabwise right-hand side is represented by the vector
\begin{equation}
[F^{(n)}]_{(i,\bm{\alpha},m)}
:=
\int_{I_n}\int_\Gamma
\left\langle
f(\bm x,t,\bm\xi),
\,v_i^{(n)}(t)\Psi_{\bm{\alpha}}(\bm\xi)\phi_m(\bm x)
\right\rangle_{H}
\,\rho(\bm\xi)\,\mathrm d\bm\xi\,\mathrm dt .
\label{eq:load_vector}
\end{equation}
This is the \emph{slab load vector}; it contains the forcing contribution projected onto the full tensor-product test basis.

\paragraph{Initial-value vector on the first slab.}
On the first slab, the incoming trace is given by the projected initial value. This yields the vector
\begin{equation}
[B_0^{(1)}]_{(i,\bm{\alpha},m)}
:=
v_i^{(1)}(t_0^+)\,
\left\langle
u_{0,h,p_\xi,\bm{\alpha}},\phi_m
\right\rangle_H,
\label{eq:initial_vector}
\end{equation}
where \(u_{0,h,p_\xi,\bm{\alpha}}\) denotes the spatial coefficient corresponding to the stochastic mode \(\Psi_{\bm{\alpha}}\) of the projected initial datum \(u_{0,h,p_\xi}\). Hence \(B_0^{(1)}\) is the \emph{initial trace vector} on the first slab.

\begin{problem}[Tensor-product matrix form on each slab]
\label{prob:tensor-product-slab-system}
With these definitions, insertion of \eqref{eq:discrete_expansion} into the
slabwise dG stochastic Galerkin problem shows that the local coefficient vector
\(U^{(n)}\) satisfies
\begin{equation}
\mathbb A^{(n)} U^{(n)} = F^{(n)} + \mathbb J^{(n)} U^{(n-1)},
\qquad n=2,\dots,N,
\label{eq:local_tensor_system}
\end{equation}
where
\begin{equation}
\mathbb A^{(n)}
=
M_{\bm{\xi}}\otimes A_t^{(n)}\otimes M_{\bm{x}}
+
\sum_{\bm{\mu}\in \mathcal M_a}
G_{\bm{\mu}}\otimes B_t^{(n)}\otimes K_{\bm{\mu}},
\ \text{ and } \ 
\mathbb J^{(n)}
=
M_{\bm{\xi}}\otimes C_t^{(n)}\otimes M_{\bm{x}}.
\label{eq:local_rhs_tensor}
\end{equation}
On the first slab one has $\mathbb A^{(1)}U^{(1)}=F^{(1)}+B_0^{(1)}.$
\end{problem}

The interpretation of these terms is as follows. The matrix \(\mathbb A^{(n)}\) is the \emph{local slab operator}; it contains the time-evolution part and the coefficient-weighted diffusion part. The matrix \(\mathbb J^{(n)}\) is the \emph{slab transfer operator}; it injects the outgoing trace from the previous slab into the current slab through the dG jump term.

\begin{remark}[Algebraic solver and matrix-free implementation]
\label{rem:solver-matrix-free-sg} To solve the slab system \eqref{eq:local_tensor_system}, we use the space-time multigrid solver technology developed for tensor-product space-time finite element discretizations of parabolic and hyperbolic problems, Stokes problems, and Navier--Stokes problems; see \cite{NP_parabolic,NMP_hp,MargenbergBause2026NS}. The implementation is based on the matrix-free framework of the deal.II library \cite{dealII97,kronbichlerKormann2012}. Matrix-free means basically that the full global space-time stochastic Galerkin matrix \eqref{eq:local_tensor_system} is not assembled explicitly. Instead, only its action on a block vector is implemented. The small temporal matrices, the sparse stochastic coupling patterns generated by the triple-product matrices \(G_{\bm\mu}\), and local patch matrices used inside the smoother are stored or assembled where needed. The linear systems are solved by flexible GMRES. For the deterministic space-time subsystems, one step of a geometric multigrid \(V\)-cycle is used as preconditioner. The multigrid hierarchy may coarsen in space and, depending on the configuration, also in polynomial degree or time. The smoother is a local space-time Vanka-type patch smoother, where local dense patch problems are formed on space-time patches and inverted approximately. This follows the solver strategy analyzed and tested in \cite{NP_parabolic,NMP_hp,MargenbergBause2026NS}. For the stochastic Galerkin system \eqref{eq:local_tensor_system}, this deterministic space-time solver is embedded into a block-Jacobi preconditioner in the stochastic space. More precisely, the stochastic off-diagonal couplings induced by the matrices \(G_{\bm\mu}\) are kept in the matrix-vector product and are handled by FGMRES, whereas the stochastic diagonal blocks are preconditioned by the deterministic GMG space-time solver. Thus the preconditioner has the form where \(P_{\bm\alpha}^{-1}\) approximates the inverse of the deterministic
space-time block
\[
A_{\bm\alpha}^{(n)}
=
A_t^{(n)}\otimes M_{\bm x}
+
\sum_{\bm\mu\in\mathcal M_a}
[G_{\bm\mu}]_{\bm\alpha\bm\alpha}
B_t^{(n)}\otimes K_{\bm\mu}.
\]
In the present paper, this solver is used as the computational engine for both the stochastic Galerkin runs and the deterministic pathwise solves inside the Monte-Carlo method. A detailed discussion of the stochastic Galerkin multigrid preconditioner, including its implementation and performance analysis, is deferred to the follow-up work \cite{DaworBauseMargenbergSGPreconditioner}.
\end{remark}

\section{Stochastic benchmarks and experimental setup}
\label{sec:manufactured-benchmarks}

Following standard verification and validation principles for prescribed analytical solutions \cite{Roache1998VV,OberkampfTrucano2002}, we validate the intrusive stochastic Galerkin framework using \emph{finite} stochastic polynomial order. It is designed to verify exact stochastic mode capture: once the discrete chaos space contains all exact stochastic modes, the stochastic truncation error must vanish identically, so that only the deterministic space-time discretization error and the algebraic solver tolerance remain.

Throughout this section we use
\[
D=(-1,1)^2,
\qquad
I=(0,T),
\qquad
T=1,
\qquad
M=4,
\qquad
\xi_i \sim \mathcal{N}(0,1) \ \text{ as in } \ (\ref{eq:random_varibale_law})
\]
All stochastic Galerkin results reported below are obtained with the corrected physical right-hand side assembly. The projected forcing modes \(f_{\bm\beta}=\mathbb E_\rho[f\Psi_{\bm\beta}]\) are evaluated analytically from Hermite expansions (projections on stochastic modes with respect to the Hermite basis). A tensor-product Gauss--Hermite projector was used separately to verify these coefficients, and the projected coefficients agreed with the analytic ones to roundoff accuracy.
\subsection{Quantities of interest}
\label{subsec:qoi}

The stochastic Galerkin solution contains more information than a deterministic solution. In particular, we are interested not only in the full stochastic field, but also in its mean behavior and in its fluctuations around the mean. For that reason, the primary quantities of interest are the stochastic mean and the stochastic variance. Let \(\operatorname{He}_n\) denote the probabilists' Hermite polynomial, defined by (\ref{eq:Hermit_def}). The following expectation identities are used repeatedly when interpreting the mean and variance.

\begin{lemma}[Expectation of Hermite polynomials under the Gaussian measure]
\label{lem:expectation-hermite-1d}
Let \(X\sim\mathcal N(0,1)\). Then
\[
\mathbb E_{\rho}[\operatorname{He}_n(X)]
= \int_{\mathbb R}\operatorname{He}_n(x)\rho(x)\,dx
=
\delta_{n0} =
\begin{cases}
1, & n=0,\\[0.3em]
0, & n\ge 1
\end{cases} \ \ \text{ , where } \ \rho(x)=\frac{1}{\sqrt{2\pi}}e^{-x^2/2}
\]
\end{lemma}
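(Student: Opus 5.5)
The plan is to use the Rodrigues formula \eqref{eq:Hermit_def} directly, so that the Gaussian weight cancels against the prefactor $e^{y^2/2}$. The case $n=0$ is immediate. For $n=0$ we have $\operatorname{He}_0\equiv 1$, and the claim reduces to $\int_{\mathbb R}\rho(x)\,dx=1$, which is the normalization of the standard Gaussian density.

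For $n\ge 1$, inserting \eqref{eq:Hermit_def} into the expectation gives
\[
\mathbb E_\rho[\operatorname{He}_n(X)]
=\frac{(-1)^n}{\sqrt{2\pi}}\int_{\mathbb R}\frac{d^n}{dx^n}\bigl(e^{-x^2/2}\bigr)\,dx ,
\]
since $e^{x^2/2}\rho(x)=1/\sqrt{2\pi}$. By the fundamental theorem of calculus this equals
\[
\frac{(-1)^n}{\sqrt{2\pi}}\Bigl[\frac{d^{n-1}}{dx^{n-1}}e^{-x^2/2}\Bigr]_{-\infty}^{\infty}.
\]
It then remains to show that this boundary term vanishes. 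By the Rodrigues formula itself, or by a simple induction,
\[
\frac{d^{n-1}}{dx^{n-1}}e^{-x^2/2}=(-1)^{n-1}\operatorname{He}_{n-1}(x)\,e^{-x^2/2}.
\]
This is a polynomial times a Gaussian, so it tends to $0$ as $|x|\to\infty$. The same observation shows that the integrand is integrable, so the improper integral is well defined.

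As a cross-check, I would also note the alternative argument via orthonormality \eqref{eq:hermite-orthonormality} in one dimension. Since $\operatorname{He}_0=1=\psi_0$, we get $\mathbb E_\rho[\operatorname{He}_n]=\sqrt{n!}\,\langle\psi_n,\psi_0\rangle_{L^2_\rho}=\sqrt{n!}\,\delta_{n0}=\delta_{n0}$. This route is a one-liner given the stated orthonormality. I would present the direct computation as the main proof, however, because the orthonormality claim in the paper is asserted rather than proved, and using it here risks circularity. The only step requiring any care is the decay of the boundary term, which follows from the super-polynomial decay of $e^{-x^2/2}$.
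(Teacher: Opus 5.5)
Your proof is correct and follows the same route as the paper, whose proof just says to use the Rodrigues formula together with the one-dimensional Gaussian density: the factor $e^{x^2/2}$ cancels the weight, the integral of the $n$-th derivative reduces to a boundary term, and that term vanishes because it is a polynomial times $e^{-x^2/2}$. Your version supplies the details the paper leaves implicit, namely the $n=0$ normalization, the identification of the boundary term as $(-1)^{n-1}\operatorname{He}_{n-1}(x)e^{-x^2/2}$, and integrability.
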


\begin{proof}
Use the Rodrigues formula and the one-dimensional Gaussian density.
\end{proof}

\begin{lemma}[Expectation of tensorized multivariate Hermite polynomials]
\label{lem:expectation-multivariate-hermite}
Let \(\bm{\xi}=(\xi_1,\ldots,\xi_M)\) be a standard Gaussian random vector with
independent components. For a multi-index
\(\bm{\alpha}=(\alpha_1,\ldots,\alpha_M)\), define
\[
\operatorname{He}_{\bm{\alpha}}(\bm{\xi})
:=
\prod_{m=1}^{M}
\operatorname{He}_{\alpha_m}(\xi_m).
\]
Then
\[
\mathbb E_{\rho}
\left[
\operatorname{He}_{\bm{\alpha}}(\bm{\xi})
\right]
= \int_{\Gamma}
\operatorname{He}_{\bm{\alpha}}(\bm{\xi})
\rho(\bm{\xi})\,d\bm{\xi}
=
\delta_{\bm{\alpha}\bm{0}} =
\begin{cases}
1, & \bm{\alpha}=\bm{0},\\[0.3em]
0, & \bm{\alpha}\ne\bm{0}.
\end{cases}
\]
\end{lemma}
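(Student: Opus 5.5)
The plan is to reduce the multivariate statement to the one-dimensional Lemma~\ref{lem:expectation-hermite-1d} through the product structure of the Gaussian density. Since the components of \(\bm\xi\) are independent standard Gaussians, \(\rho(\bm\xi)=\prod_{m=1}^M\rho(\xi_m)\), as in \eqref{eq:random_varibale_law}. The integrand \(\operatorname{He}_{\bm\alpha}(\bm\xi)\rho(\bm\xi)=\prod_{m=1}^M \operatorname{He}_{\alpha_m}(\xi_m)\rho(\xi_m)\) is therefore a product of functions of separate variables.

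First I will check integrability. Each factor \(|\operatorname{He}_{\alpha_m}(y)|\rho(y)\) is integrable on \(\mathbb R\), because a polynomial times a Gaussian is integrable. Tonelli's theorem then shows that the absolute value of the product is integrable on \(\Gamma=\mathbb R^M\), so Fubini's theorem applies. It factorizes the integral as
\[
\int_\Gamma \operatorname{He}_{\bm\alpha}(\bm\xi)\rho(\bm\xi)\,d\bm\xi=\prod_{m=1}^M\int_{\mathbb R}\operatorname{He}_{\alpha_m}(y)\rho(y)\,dy .
\]
This is the same factorization already used for the orthonormality relation \eqref{eq:hermite-orthonormality}.

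Next I apply Lemma~\ref{lem:expectation-hermite-1d} to each factor, which gives the product \(\prod_{m=1}^M\delta_{\alpha_m 0}\). This product equals \(1\) exactly when every \(\alpha_m=0\), that is, when \(\bm\alpha=\bm 0\), and equals \(0\) as soon as some \(\alpha_m\ge1\). Hence it is \(\delta_{\bm\alpha\bm 0}\).

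There is no real obstacle. The only point needing care is justifying Fubini, which is handled by the integrability remark above. One could alternatively derive the result from \eqref{eq:hermite-orthonormality} with \(\bm\beta=\bm 0\), using \(\Psi_{\bm 0}=1\) and \(\operatorname{He}_{\bm\alpha}=\sqrt{\bm\alpha!}\,\Psi_{\bm\alpha}\). That route gives \(\mathbb E_\rho[\operatorname{He}_{\bm\alpha}]=\sqrt{\bm\alpha!}\,\delta_{\bm\alpha\bm0}=\delta_{\bm\alpha\bm0}\), which serves as a consistency check.
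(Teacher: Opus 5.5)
Your proposal is correct and follows essentially the same route as the paper: you factorize the expectation through the product density of the independent components and apply Lemma~\ref{lem:expectation-hermite-1d} to each one-dimensional factor. Your explicit Tonelli/Fubini integrability check is a detail the paper leaves implicit. Your alternative derivation from \eqref{eq:hermite-orthonormality} with $\bm\beta=\bm 0$ is also valid.
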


\begin{proof}
By independence and tensorization,
\[
\mathbb E_\rho[\operatorname{He}_{\bm{\alpha}}(\bm{\xi})]
=
\prod_{m=1}^{M}
\mathbb E[\operatorname{He}_{\alpha_m}(\xi_m)].
\]
Lemma~\ref{lem:expectation-hermite-1d} shows that each factor equals \(1\) if
\(\alpha_m=0\) and vanishes otherwise; hence the product is nonzero only for
\(\bm{\alpha}=\bm{0}\). The normalized identity follows immediately from
\(\Psi_{\bm{\alpha}}
=
\operatorname{He}_{\bm{\alpha}}/\sqrt{\bm{\alpha}!}\).
\end{proof}

For a stochastic field \(w=w(t,\bm{x},\bm{\xi})\), we write the Hermite chaos expansion
\begin{equation}
w(t,\bm{x},\bm{\xi})
=
\sum_{\bm{\alpha}\in\mathbb N_0^M}
w_{\bm{\alpha}}(\bm{x}, t)
\Psi_{\bm{\alpha}}(\bm{\xi}).
\label{eq:general-chaos-expansion}
\end{equation}

\paragraph{Mean.} The stochastic mean is defined by
\begin{equation}
\mathcal{M}[w](\bm{x}, t)
:=
\mathbb E_{\rho}[w](\bm{x}, t)
=
\int_{\Gamma}
w(t,\bm{x},\bm{\xi})\rho(\bm{\xi})\,d\bm{\xi}.
\label{eq:qoi-mean-definition}
\end{equation}
Using \eqref{eq:general-chaos-expansion} and Lemma~\ref{lem:expectation-multivariate-hermite}, we obtain
\begin{equation}
\mathcal{M}[w](\bm{x}, t)
=
w_{\bm{0}}(\bm{x}, t).
\label{eq:qoi-mean-chaos}
\end{equation}
Thus, the mean is exactly the zeroth stochastic Galerkin mode.

\paragraph{Variance.}
The stochastic variance is defined pointwise in \((\bm{x}, t)\) by
\begin{equation}
\mathcal{V}[w](\bm{x}, t)
:=
\operatorname{Var}_{\rho}(w)(\bm{x}, t)
=
\mathbb E_{\rho}
\left[
\left(
w(t,\bm{x},\bm{\xi})
-
\mathcal{M}[w](\bm{x}, t)
\right)^2
\right].
\label{eq:qoi-variance-definition}
\end{equation}
By the orthonormality relation \eqref{eq:hermite-orthonormality}, this reduces
to
\begin{equation}
\mathcal{V}[w](\bm{x}, t)
=
\sum_{\bm{\alpha}\ne\bm{0}}
\left|
w_{\bm{\alpha}}(\bm{x}, t)
\right|^2.
\label{eq:qoi-variance-chaos}
\end{equation}
The variance therefore measures the contribution of all nonzero stochastic modes. This is why the variance is especially sensitive to stochastic refinement.

\subsection{Error norms}
\label{subsec:error-norms}

We now define the norms used to measure the error in the quantities of interest from Subsection~\ref{subsec:qoi}. These definitions are used later in all result tables.

\begin{definition}[Error norms and error quantities]
\label{def:error-norms-and-quantities}
For a deterministic field \(z=z(\bm{x}, t)\), we use
\begin{equation}
\|z\|_{L^2_t(L^2_x)}
:=
\left(
\int_0^T
\|z(t,\cdot)\|_{L^2(D)}^2\,dt
\right)^{1/2},
\label{eq:deterministic-L2t-L2x-norm}
\end{equation}
and the corresponding space-time \(H^1\)-seminorm
\begin{equation}
\|z\|_{L^2_t(H^1_x\text{-semi})}
:=
\left(
\int_0^T
\|\nabla_{\bm{x}}z(t,\cdot)\|_{L^2(D)}^2\,dt
\right)^{1/2}.
\label{eq:deterministic-L2t-H1semi-norm}
\end{equation}

For a stochastic field \(w=w(t,\bm{x},\bm{\xi})\), the full stochastic space-time \(L^2\)-norm is
\begin{equation}
\|w\|_{L^2_t(L^2_{\rho}(L^2_x))}
:=
\left(
\int_0^T
\int_{\Gamma}
\|w(t,\cdot,\bm{\xi})\|_{L^2(D)}^2
\rho(\bm{\xi})\,d\bm{\xi}\,dt
\right)^{1/2}.
\label{eq:full-stochastic-L2-norm}
\end{equation}
The corresponding full stochastic \(H^1\)-seminorm is
\begin{equation}
\|w\|_{L^2_t(L^2_{\rho}(H^1_x\text{-semi}))}
:=
\left(
\int_0^T
\int_{\Gamma}
\|\nabla_{\bm{x}}w(t,\cdot,\bm{\xi})\|_{L^2(D)}^2
\rho(\bm{\xi})\,d\bm{\xi}\,dt
\right)^{1/2}.
\label{eq:full-stochastic-H1semi-norm}
\end{equation}

By \eqref{eq:hermite-orthonormality}, the full stochastic norms can be written as modal sums:
\begin{align}
\|w\|_{L^2_t(L^2_{\rho}(L^2_x))}^2
&=
\sum_{\bm{\alpha}}
\|w_{\bm{\alpha}}\|_{L^2_t(L^2_x)}^2,
\label{eq:modal-sum-L2}
\\
\|w\|_{L^2_t(L^2_{\rho}(H^1_x\text{-semi}))}^2
&=
\sum_{\bm{\alpha}}
\|w_{\bm{\alpha}}\|_{L^2_t(H^1_x\text{-semi})}^2.
\label{eq:modal-sum-H1semi}
\end{align}

Let \( e_{h,\tau,p_\xi} := u-u_{h,\tau,p_\xi} \) be the total approximation error. The full stochastic errors used in the experiments are
\begin{align}
E_{p_\xi}^{\mathrm{full},L^2}
&:=
\|e_{h,\tau,p_\xi}\|_{L^2_t(L^2_{\rho}(L^2_x))},
\label{eq:error-full-L2}
\\
E_{p_\xi}^{\mathrm{full},H^1}
&:=
\|e_{h,\tau,p_\xi}\|_{L^2_t(L^2_{\rho}(H^1_x\text{-semi}))}.
\label{eq:error-full-H1}
\end{align}
The mean errors are
\begin{align}
E_{p_\xi}^{\mathrm{mean},L^2}
&:=
\|\mathcal{M}[u]-\mathcal{M}[u_{h,\tau,p_\xi}]\|_{L^2_t(L^2_x)},
\label{eq:error-mean-L2}
\\
E_{p_\xi}^{\mathrm{mean},H^1}
&:=
\|\mathcal{M}[u]-\mathcal{M}[u_{h,\tau,p_\xi}]\|_{L^2_t(H^1_x\text{-semi})}.
\label{eq:error-mean-H1}
\end{align}
The variance errors are
\begin{align}
E_{p_\xi}^{\mathrm{var},L^2}
&:=
\|\mathcal{V}[u]-\mathcal{V}[u_{h,\tau,p_\xi}]\|_{L^2_t(L^2_x)},
\label{eq:error-var-L2}
\\
E_{p_\xi}^{\mathrm{var},H^1}
&:=
\|\mathcal{V}[u]-\mathcal{V}[u_{h,\tau,p_\xi}]\|_{L^2_t(H^1_x\text{-semi})}.
\label{eq:error-var-H1}
\end{align}
\end{definition}

The full errors \eqref{eq:error-full-L2}--\eqref{eq:error-full-H1} measure the complete stochastic approximation error. The mean errors \eqref{eq:error-mean-L2}--\eqref{eq:error-mean-H1} measure the error in the deterministic average field. The variance errors \eqref{eq:error-var-L2}--\eqref{eq:error-var-H1} measure the error in the stochastic fluctuations.

\subsection{Finite polynomial analytical solution}
\label{subsec:finite-polynomial-benchmark}

We now test the method with a prescribed solution of finite stochastic polynomial degree. The prescribed solution uses the following diffusion with random variables
\begin{equation}
a(\bm{x},\bm{\xi})
=
d_{\min}
+
\bigl(
\xi_1x_1^2
+
\xi_2x_1x_2
+
\xi_3x_2^2
+
\xi_4
\bigr)^2,
\qquad
\bm{x}=(x_1,x_2)\in D,
\quad
\bm{\xi}\in\Gamma=\mathbb{R}^4.
\label{eq:diffusion_benchmark}
\end{equation}
Here \(d_{\min}>0\) is fixed. Therefore
\begin{equation}
a(\bm{x},\bm{\xi})
\ge d_{\min}>0
\qquad
\text{for all }(\bm{x},\bm{\xi})\in D\times\Gamma.
\label{eq:diffusion-lower-bound}
\end{equation}
In our computations we have choosen $d_{\min} = 0.2$, however any choice of $d_{\min}$ greater than zero would be sufficient. 

\begin{remark}[Positivity versus global boundedness]
The coefficient \eqref{eq:diffusion_benchmark} is uniformly positive, but it is not uniformly bounded from above on all of \(\Gamma\), because the Gaussian variables \(\xi_1,\ldots,\xi_4\) are unbounded. More precisely, for each fixed
\(i\),
\[
\lim_{A\to\infty}\mathbb{P}(\xi_i\ge A)=0,
\qquad
\lim_{A\to\infty}\mathbb{P}(|\xi_i|\ge A)=0,
\]
but there is no finite deterministic constant \(a_{\max}\) such that
\[
a(\bm{x},\bm{\xi})
\le a_{\max}
\qquad
\text{for all }(\bm{x},\bm{\xi})\in D\times\Gamma.
\]
Nevertheless, this lack of a global upper bound is caused only by rare large
Gaussian realizations. On
\[
\Gamma_A
:=
\left\{
\bm{\xi}\in\mathbb{R}^4:
\max_{m=1,\ldots,4}|\xi_m|\le A
\right\},
\]
we have
\[
\left|
\xi_1x_1^2+\xi_2x_1x_2+\xi_3x_2^2+\xi_4
\right|
\le4A
\qquad
\text{for all }\bm{x}\in D,
\]
because \(|x_1|\le1\), \(|x_2|\le1\), and \(|x_1x_2|\le1\). Hence
\[
a(\bm{x},\bm{\xi})
\le d_{\min}+16A^2
\qquad
\text{for all }(\bm{x},\bm{\xi})\in D\times\Gamma_A.
\]
Moreover,
\[
\mathbb{P}(\Gamma_A)\to1
\qquad
\text{as }A\to\infty.
\]
Thus the coefficient is uniformly bounded on events of arbitrarily large probability, which is sufficient for the practical numerical interpretation of this benchmark.
\end{remark}

The analytical exact solution is chosen in separated form,
\begin{equation}
u_{\mathrm{ex}}(t,\bm{x},\bm{\xi})
=
\phi(\bm{x}, t)G_q(\bm{\xi}),
\label{eq:finite-exact-separated}
\end{equation}
with deterministic space-time factor
\begin{equation}
\phi(\bm{x}, t)
=
t^4
\sin\!\left(\frac{\pi}{2}(x_1+1)\right)
\sin\!\left(\frac{\pi}{2}(x_2+1)\right).
\label{eq:deterministic-factor-phi}
\end{equation}
The stochastic factor is a finite Hermite polynomial chaos expansion,
\begin{equation}
G_q(\bm{\xi})
=
\sum_{\bm{\alpha}\in\Lambda_q}
c_{\bm{\alpha}}\Psi_{\bm{\alpha}}(\bm{\xi}),
\qquad
\Lambda_q
=
\left\{
\bm{\alpha}\in\mathbb{N}_0^M:
|\bm{\alpha}|_1\le q
\right\}.
\label{eq:finite-hermite-factor}
\end{equation}
and the coefficients are chosen as
\begin{equation}
c_{\bm{\alpha}}
=
\begin{cases}
1, & \bm{\alpha}=\bm{0},\\[1mm]
\eta^{|\bm{\alpha}|_1}, & \bm{\alpha}\ne\bm{0},
\end{cases}
\qquad
\label{eq:finite-chaos-coefficients}
\end{equation}
where $\eta \in \mathbb{R}_{+}$.
In the finite-chaos experiment we use $M=4$, $q=6$. Therefore, the exact analytical solution is explicitly
\begin{equation}
\boxed{
u_{\mathrm{ex}}(t,x_1,x_2,\xi_1,\xi_2,\xi_3,\xi_4)
=
t^4
\sin\!\left(\frac{\pi}{2}(x_1+1)\right)
\sin\!\left(\frac{\pi}{2}(x_2+1)\right)
\sum_{\alpha_1+\alpha_2+\alpha_3+\alpha_4\le6}
c_{\bm{\alpha}}
\prod_{m=1}^{4}
\frac{\operatorname{He}_{\alpha_m}(\xi_m)}
{\sqrt{\alpha_m!}}
}
\label{eq:finite-exact-solution-M4-q6}
\end{equation}
\begin{remark}[Finite-chaos modal truncation]
The corresponding modal coefficients are
\begin{equation}
u_{\bm{\alpha}}(\bm{x}, t)
=
c_{\bm{\alpha}}\phi(\bm{x}, t),
\qquad
\bm{\alpha}\in\Lambda_q,
\label{eq:finite-modal-coefficients-active}
\end{equation}
and
\begin{equation}
u_{\bm{\alpha}}(\bm{x}, t)=0,
\qquad
\bm{\alpha}\notin\Lambda_q.
\label{eq:finite-modal-coefficients-inactive}
\end{equation}

This immediately explains what we expect numerically. If \(p_\xi<q\), then
the stochastic Galerkin approximation omits exact modes in
\(\Lambda_q\setminus\Lambda_{p_\xi}\). By the modal identity
\eqref{eq:modal-sum-L2}, the pure stochastic truncation error is
\begin{equation}
\left(E_{p_\xi}^{\mathrm{trunc},L^2}\right)^2
=
\|\phi\|_{L^2_t(L^2_x)}^2
\sum_{\bm{\alpha}\in\Lambda_q\setminus\Lambda_{p_\xi}}
|c_{\bm{\alpha}}|^2.
\label{eq:finite-truncation-error-L2}
\end{equation}
Once \(p_\xi\ge q\), all exact stochastic modes are contained in the discrete
chaos space, and hence
\begin{equation}
E_{p_\xi}^{\mathrm{trunc},L^2}=0.
\label{eq:finite-truncation-zero}
\end{equation}
Thus, after \(p_\xi=q=6\), any remaining error is caused by the fixed
deterministic space-time discretization and the algebraic solver tolerance,
not by stochastic truncation.
\end{remark}

The solution satisfies the homogeneous initial condition,
\[
u_{\mathrm{ex}}(0,\bm{x},\bm{\xi})=0,
\]
because of the factor \(t^4\). It also satisfies homogeneous Dirichlet
boundary conditions on \(\partial D\), since the sine factors in
\eqref{eq:deterministic-factor-phi} vanish for \(x_1=\pm1\) or \(x_2=\pm1\).

The right-hand side is defined by inserting the exact solution into the random
heat equation:
\begin{equation}
f(t,\bm{x},\bm{\xi})
=
\partial_tu_{\mathrm{ex}}(t,\bm{x},\bm{\xi})
-
\nabla_{\bm{x}}\cdot
\left(
a(\bm{x},\bm{\xi})
\nabla_{\bm{x}}u_{\mathrm{ex}}(t,\bm{x},\bm{\xi})
\right).
\label{eq:manufactured_rhs_benchmark}
\end{equation}
Since \(G_q\) is independent of \(t\) and \(\bm{x}\), this becomes
\begin{equation}
f(t,\bm{x},\bm{\xi})
=
\left[
\partial_t\phi(\bm{x}, t)
-
\nabla_{\bm{x}}\cdot
\left(
a(\bm{x},\bm{\xi})
\nabla_{\bm{x}}\phi(\bm{x}, t)
\right)
\right]
G_q(\bm{\xi}).
\label{eq:finite_rhs_factorized}
\end{equation}
Thus the forcing carries the same finite-chaos stochastic factor as the exact
solution itself.

\subsection{Finite polynomial benchmark: numerical results}
\label{subsec:finite-results}
The finite-chaos benchmark is computed at a fixed deterministic space-time resolution in order to isolate the effect of increasing the stochastic polynomial degree \(p_\xi\). The physical domain is \(D=(-1,1)^2\), the final time is \(T=1\), and the random input consists of \(M=4\) independent standard Gaussian variables. The prescribed solution has finite Hermite degree \(q=6\), with coefficients $\eta=0.35$. The deterministic discretization uses conforming finite elements of degree \(p_x=3\) in space and a \(\mathrm{dG}(3)\) time discretization with a right Gauss--Radau nodal basis on each time slab. For the stochastic-refinement study we fix refinement level \(6\), corresponding to \(16384\) cells, \(N_x=148225\) spatial degrees of freedom, time step \(\tau=\Delta t=0.015625\), and \(64\) time slabs. The stochastic polynomial degree is varied as \(p_\xi=0,\ldots,6\), giving \(N_\xi=\binom{M+p_\xi}{p_\xi}\) stochastic degrees of freedom. The resulting slab systems are solved by flexible GMRES with the block-Jacobi GMG preconditioner in stochastic space.\\

The error quantities are shown in Table~\ref{tab:finite-errors-rates}. The table reports the full stochastic errors, the mean errors, and the variance errors from \eqref{eq:error-full-L2}, \eqref{eq:error-full-H1}, \eqref{eq:qoi-mean-chaos}, \eqref{eq:error-var-L2}, and \eqref{eq:error-var-H1}. It also lists empirical rates with respect to the growth of the stochastic space dimension. The corresponding graphical summaries are shown in Figures~\ref{fig:stochastic-refinement-error} and \ref{fig:stochastic-refinement-rate}.

\begin{table}[htbp]
\centering
\caption{Error quantities and empirical stochastic rates with respect to stochastic dimension growth for the finite polynomial stochastic-refinement benchmark.}
\label{tab:finite-errors-rates}

\scriptsize
\setlength{\tabcolsep}{2.6pt}
\renewcommand{\arraystretch}{1.08}

\resizebox{\linewidth}{!}{%
\begin{tabular}{rrrrrrrrrrr}
\toprule
\multicolumn{2}{c}{Stochastic space}
& \multicolumn{6}{c}{Error quantities}
& \multicolumn{3}{c}{Dimension-growth rates} \\
\cmidrule(lr){1-2}
\cmidrule(lr){3-8}
\cmidrule(lr){9-11}
\(p_\xi\)
& \(N_\xi\)
& \(E_{p_\xi}^{\mathrm{full},L^2}\)
& \(E_{p_\xi}^{\mathrm{full},H^1}\)
& \(E_{p_\xi}^{\mathrm{mean},L^2}\)
& \(E_{p_\xi}^{\mathrm{mean},H^1}\)
& \(E_{p_\xi}^{\mathrm{var},L^2}\)
& \(E_{p_\xi}^{\mathrm{var},H^1}\)
& \(r_\xi(E^{\mathrm{full},L^2})\)
& \(r_\xi(E^{\mathrm{mean},L^2})\)
& \(r_\xi(E^{\mathrm{var},L^2})\) \\
\midrule
0 &   1
& \(2.76327\times 10^{-1}\)
& \(6.13993\times 10^{-1}\)
& \(8.62293\times 10^{-3}\)
& \(2.34782\times 10^{-2}\)
& \(1.24882\times 10^{-1}\)
& \(3.20336\times 10^{-1}\)
& --     & --     & --     \\

1 &   5
& \(1.48166\times 10^{-1}\)
& \(3.29533\times 10^{-1}\)
& \(8.62293\times 10^{-3}\)
& \(2.34782\times 10^{-2}\)
& \(3.14508\times 10^{-2}\)
& \(8.22517\times 10^{-2}\)
& 0.387  & 0.000  & 0.857  \\

2 &  15
& \(7.22494\times 10^{-2}\)
& \(1.60793\times 10^{-1}\)
& \(3.77096\times 10^{-4}\)
& \(1.46579\times 10^{-3}\)
& \(2.97674\times 10^{-3}\)
& \(1.06823\times 10^{-2}\)
& 0.654  & 2.849  & 2.146  \\

3 &  35
& \(3.31301\times 10^{-2}\)
& \(7.37854\times 10^{-2}\)
& \(3.77096\times 10^{-4}\)
& \(1.46579\times 10^{-3}\)
& \(3.06690\times 10^{-4}\)
& \(1.57806\times 10^{-3}\)
& 0.920  & 0.000  & 2.682  \\

4 &  70
& \(1.44300\times 10^{-2}\)
& \(3.21591\times 10^{-2}\)
& \(2.31986\times 10^{-5}\)
& \(1.14981\times 10^{-4}\)
& \(5.41277\times 10^{-5}\)
& \(1.60359\times 10^{-4}\)
& 1.199  & 4.023  & 2.502  \\

5 & 126
& \(5.70228\times 10^{-3}\)
& \(1.27147\times 10^{-2}\)
& \(2.31986\times 10^{-5}\)
& \(1.14981\times 10^{-4}\)
& \(1.45194\times 10^{-5}\)
& \(3.90733\times 10^{-5}\)
& 1.580  & 0.000  & 2.239  \\

6 & 210
& \(4.22465\times 10^{-10}\)
& \(4.47915\times 10^{-8}\)
& \(3.25162\times 10^{-10}\)
& \(3.44901\times 10^{-8}\)
& \(1.15674\times 10^{-10}\)
& \(2.58393\times 10^{-8}\)
& 32.140 & 21.877 & 22.983 \\

7 & 330
& \(4.22119\times 10^{-10}\)
& \(4.47912\times 10^{-8}\)
& \(3.25040\times 10^{-10}\)
& \(3.44901\times 10^{-8}\)
& \(1.15616\times 10^{-10}\)
& \(2.58392\times 10^{-8}\)
& 0.000  & 0.000  & 0.000  \\

8 & 495
& \(4.22119\times 10^{-10}\)
& \(4.47912\times 10^{-8}\)
& \(3.25040\times 10^{-10}\)
& \(3.44901\times 10^{-8}\)
& \(1.15616\times 10^{-10}\)
& \(2.58392\times 10^{-8}\)
& 0.000  & 0.000  & 0.000  \\
\bottomrule
\end{tabular}%
}
\end{table}

The behavior in Table~\ref{tab:finite-errors-rates} and Figure~\ref{fig:stochastic-refinement-error} reflects the finite-chaos structure of the analytical prescribed solution. For \(p_\xi<q=6\), the SG space does not yet contain all modes in \(\Lambda_q\). Consequently, the full stochastic errors and the variance errors decrease as the omitted Hermite tail is reduced. The full \(L^2\)-error decreases from \(2.76327\times10^{-1}\) at \(p_\xi=0\) to \(5.70228\times10^{-3}\) at \(p_\xi=5\), while the variance \(L^2\)-error decreases from \(1.24882\times10^{-1}\) to \(1.45194\times10^{-5}\). At \(p_\xi=q=6\), the finite stochastic expansion is fully represented. The full, mean, and variance errors then drop simultaneously to the deterministic/algebraic floor, in agreement with \eqref{eq:finite-truncation-zero}.\\

The mean-error columns show a characteristic pairwise behavior. The mean is the zeroth chaos mode, as stated in \eqref{eq:qoi-mean-chaos}, but the equation for this mode is not independent of the other stochastic modes. Through the random diffusion coefficient and the triple products \eqref{eq:triple_stoch}, the mean equation is coupled to the retained fluctuation modes. In the present benchmark the diffusion coefficient is a square of a linear Gaussian expression and therefore has an even stochastic structure. Thus, the mean error is unchanged when passing from \(p_\xi=0\) to \(p_\xi=1\), drops at \(p_\xi=2\), remains unchanged at \(p_\xi=3\), drops again at \(p_\xi=4\), and remains unchanged at \(p_\xi=5\). Thus the pattern \((0,1),(2,3),(4,5)\) is not a numerical accident: the odd enrichment levels add modes that do not improve the even stochastic chain coupled to the mean, whereas the even enrichment levels add the next relevant modes. Finally, \(p_\xi=6\) completes the prescribed finite chaos and reduces the mean error to the fixed floor.\\

\begin{figure}[ht]
\centering
\includegraphics[width=0.9\linewidth]{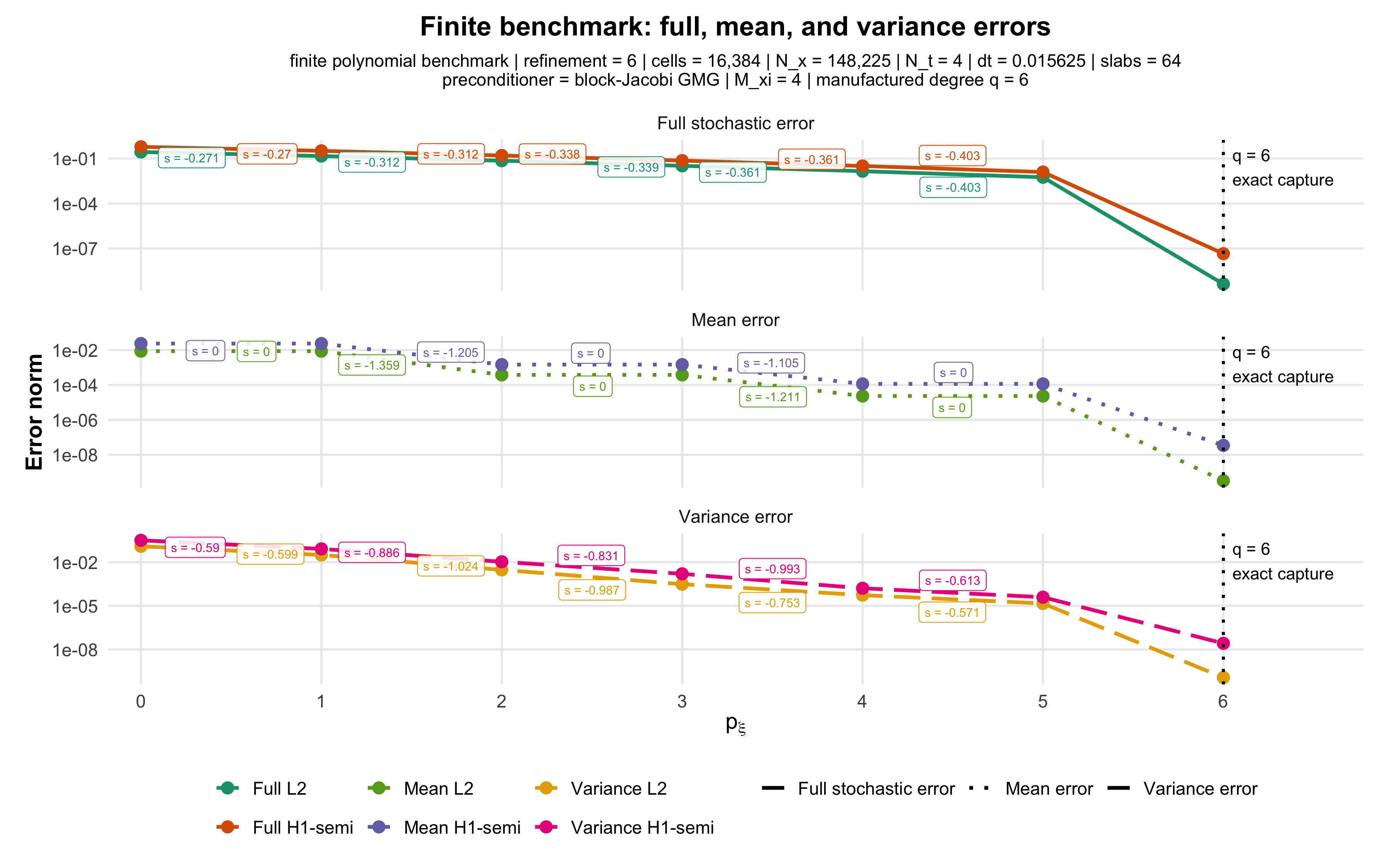}
\caption{Stochastic-refinement error diagnostics.}
\label{fig:stochastic-refinement-error}
\end{figure}

\begin{figure}[ht]
\centering
\includegraphics[width=0.9\linewidth]{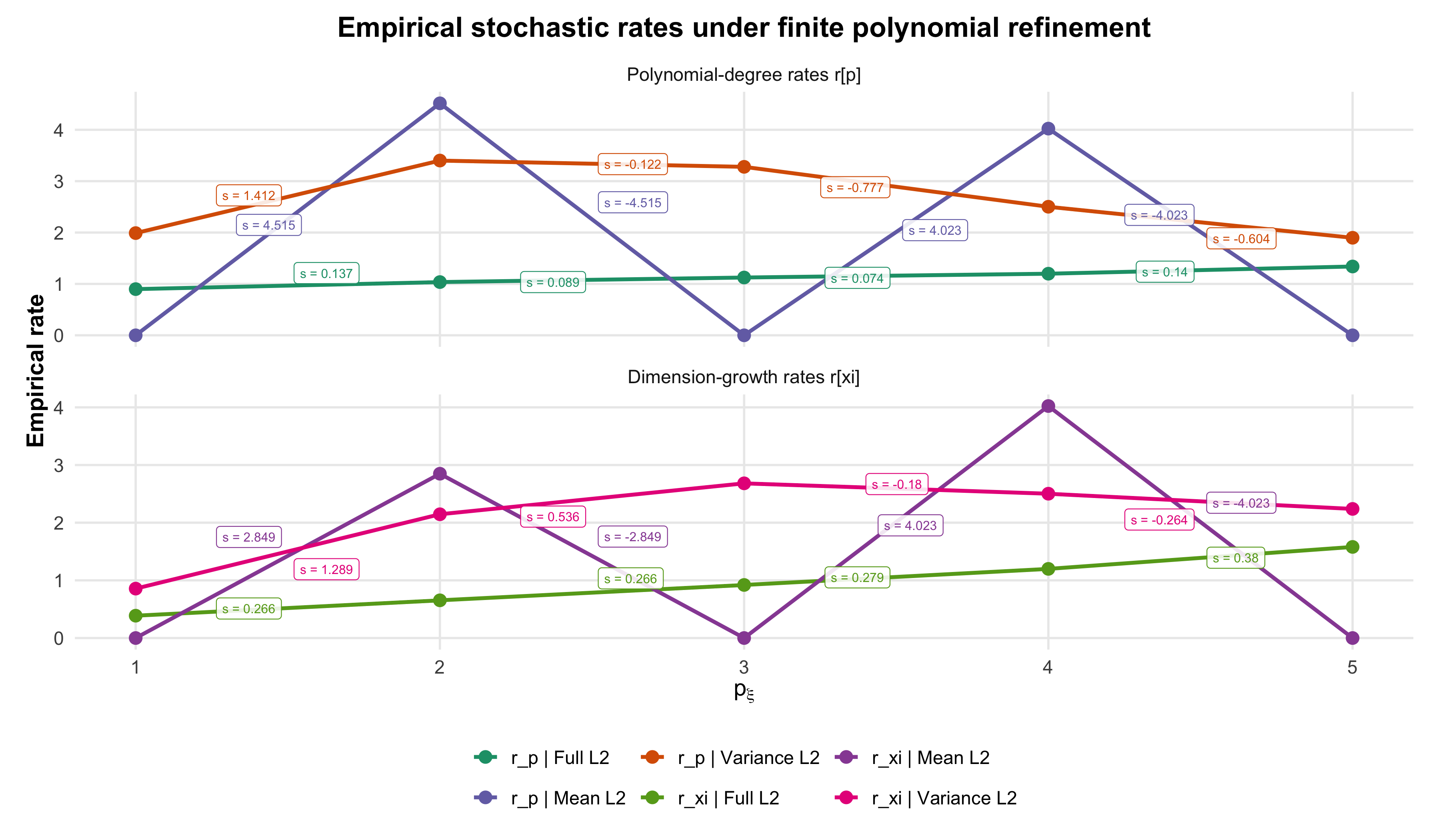}
\caption{Stochastic-refinement rate diagnostics.}
\label{fig:stochastic-refinement-rate}
\end{figure}

It is also useful to measure rates with respect to the growth of the stochastic
dimension \(N_\xi\). For an error sequence \(E_{p_\xi}\), we use
\begin{equation}
 r_p(p_\xi)=\log\!\left(\frac{E_{p_\xi-1}}{E_{p_\xi}}\right),
 \qquad
 r_\xi(E;p_\xi)=
 \frac{\log(E_{p_\xi-1}/E_{p_\xi})}
 {\log(N_\xi(p_\xi)/N_\xi(p_\xi-1))},
 \qquad p_\xi\ge1.
\label{eq:stochastic-dimension-rate}
\end{equation}
The corresponding empirical rates are listed in Table~\ref{tab:finite-errors-rates} and plotted in Figure~\ref{fig:stochastic-refinement-rate} for the pre-floor range \(p_\xi=1,\ldots,5\). The full-error rates increase over this range, which indicates that the stochastic truncation error is reduced more efficiently as the polynomial space approaches the exact degree. The variance rates are larger than the full-error rates, showing that the fluctuation component is resolved particularly efficiently by stochastic enrichment. The mean-rate plot again reflects the even--odd behavior discussed above: the rate is zero when an odd degree is added and large when the next even degree is included. The very large rates reported in Table~\ref{tab:finite-errors-rates} at \(p_\xi=6\) are not asymptotic rates; they record the finite-chaos capture jump. For \(p_\xi>6\), the stochastic truncation has already vanished and the errors remain on the fixed space-time and algebraic floor.

\subsection{Algebraic size, solver performance, and computational work for the finite polynomial benchmark}
\label{subsec:finite-computational-work}

The finite benchmark is not only a convergence test; it also shows the
computational cost of increasing the stochastic polynomial degree. For a
total-degree index set $\Lambda_{p_\xi} = \{\bm{\alpha}\in\mathbb{N}_0^M:|\bm{\alpha}|_1\le p_\xi\},$ the number of stochastic modes is
\begin{equation}
N_\xi(p_\xi)
=
|\Lambda_{p_\xi}|
=
\binom{M+p_\xi}{p_\xi}.
\label{eq:stochastic-dofs-count}
\end{equation}
Since \(M=4\), $N_\xi(p_\xi)=\binom{4+p_\xi}{p_\xi}.$ On each time slab, the stochastic Galerkin size is
\begin{equation}
N_{\mathrm{SG,slab}}(p_\xi)
=
N_\xi(p_\xi)\,N_t\,N_x.
\label{eq:sg-slab-size}
\end{equation}
Here $ N_t=4, \ N_x=148225.$ As a machine-independent work proxy, we use
\begin{equation}
W(p_\xi)
:=
N_{\mathrm{SG,slab}}(p_\xi)
\cdot
\texttt{prec\_calls}(p_\xi).
\label{eq:work-proxy}
\end{equation}
This quantity combines the algebraic size of one stochastic Galerkin slab system with the number of preconditioner applications required by FGMRES. In Table~\ref{tab:finite-size-work-solver-performance}, \(\text{avg.\ FGMRES}\) denotes the average number of FGMRES iterations per slab, computed as the total number of Krylov steps divided by the number of slab batches. The columns ``vmult calls'' and ``prec. calls'' count, respectively, applications of the stochastic Galerkin operator and applications of the SG preconditioner. The time \(\text{wall}\) is the total measured wall-clock time for the refinement run, while \(\text{solve}\) is the accumulated time spent inside the FGMRES solve calls. Finally, \(\text{apply}\) and \(\text{prec}\) are accumulated timings for stochastic operator applications and preconditioner applications, respectively.

\begin{table}
\caption{Algebraic size, work proxy, solver statistics, and timing data for the
finite polynomial stochastic-refinement benchmark.}
\label{tab:finite-size-work-solver-performance}

\scriptsize
\setlength{\tabcolsep}{2.2pt}
\renewcommand{\arraystretch}{1.08}

\resizebox{\linewidth}{!}{%
\begin{tabular}{@{}rrrrrrrrrrr@{}}
\hline
\multicolumn{3}{c}{Algebraic size}
& \multicolumn{3}{c}{Solver statistics}
& \multicolumn{4}{c}{Timing}
& \multicolumn{1}{c}{Work} \\
\cline{1-3}\cline{4-6}\cline{7-10}\cline{11-11}
\(p_\xi\)
& \(N_\xi\)
& \(N_{\mathrm{SG,slab}}\)
& avg. FGMRES
& vmult calls
& prec. calls
& wall [s]
& solve [s]
& apply [s]
& prec. [s]
& \(W\) \\
\hline
0 &   1 &       592900 &  8.7031 &  621 &  557 &     48.7720 &     27.5779 &    0.2910 &     27.2334 & \(3.30245300\times10^{8}\) \\
1 &   5 &      2964500 & 12.6250 &  872 &  808 &    290.2853 &    204.6696 &    5.9563 &    198.6127 & \(2.39531600\times10^{9}\) \\
2 &  15 &      8893500 & 17.7344 & 1199 & 1135 &   1135.3510 &    877.4686 &   37.3328 &    839.0575 & \(1.00941225\times10^{10}\) \\
3 &  35 &     20751500 & 21.0000 & 1408 & 1344 &   3040.5230 &   2454.6130 &  123.2545 &   2327.3950 & \(2.78900160\times10^{10}\) \\
4 &  70 &     41503000 & 24.9531 & 1665 & 1597 &   7070.7110 &   5910.6170 &  328.1399 &   5570.0270 & \(6.62802910\times10^{10}\) \\
5 & 126 &     74705400 & 27.6250 & 1858 & 1768 &  13922.5400 &  11881.4700 &  705.7213 &  11150.1200 & \(1.32079147\times10^{11}\) \\
6 & 210 &    124509000 & 31.0156 & 2090 & 1985 &  25834.0100 &  22485.8000 & 1388.9440 &  21046.5600 & \(2.47150365\times10^{11}\) \\
\hline
\end{tabular}
}
\end{table}

\begin{figure}[htbp]
\centering
\includegraphics[width=0.72\linewidth]{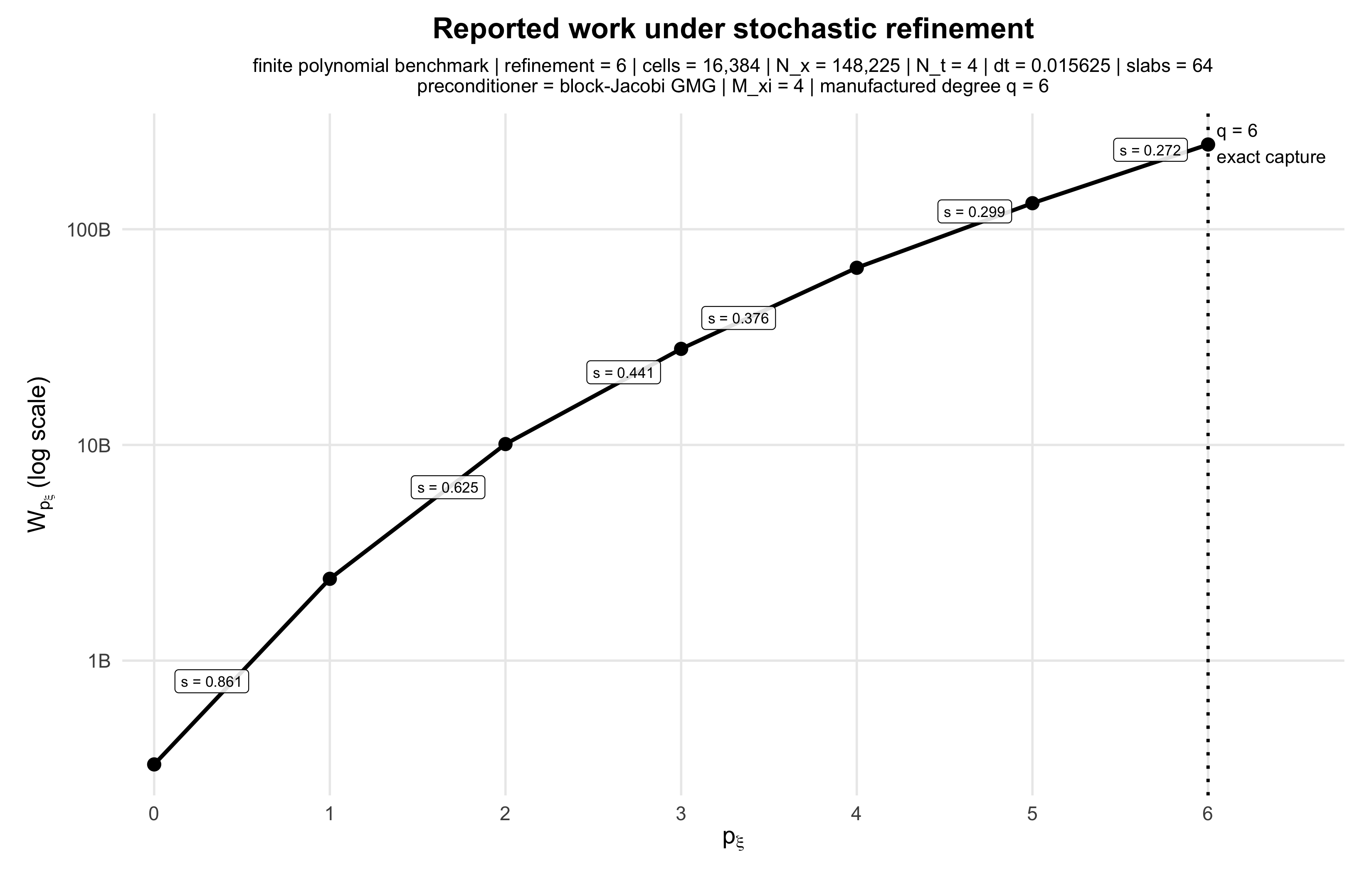}
\caption{Reported work proxy \(W(p_\xi)\) for the finite polynomial stochastic-refinement benchmark.}
\label{fig:work}
\end{figure}

\begin{figure}[tbp]
\centering
\includegraphics[width=0.9\linewidth]{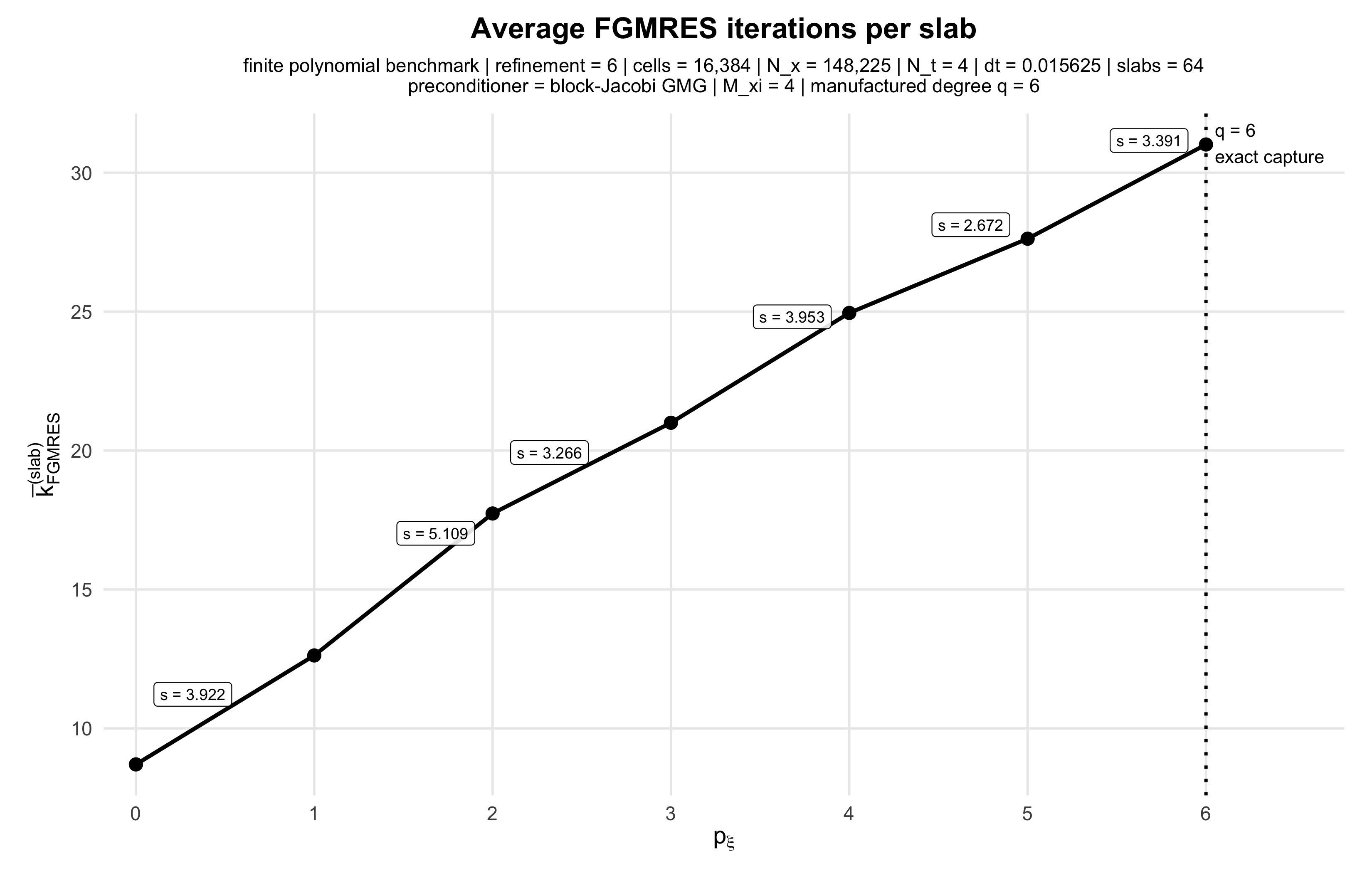}
\caption{Average FGMRES iterations per time slab for the stochastic-refinement study as a function of the chaos degree \(p_\xi\).}
\label{fig:stochastic-refinement-fgmres}
\end{figure}

\begin{figure}[tbp]
\centering
\includegraphics[width=0.9\linewidth]{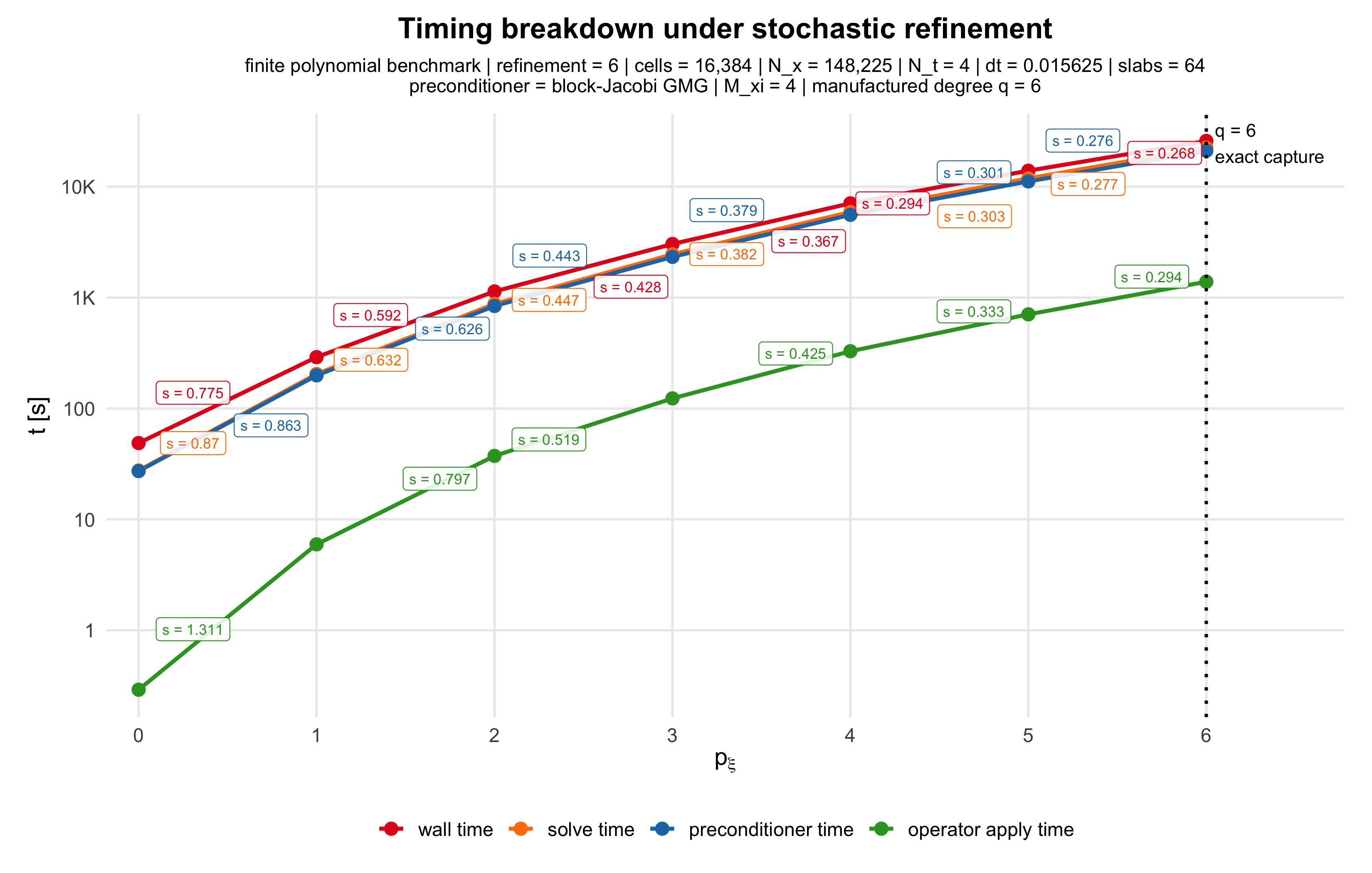}
\caption{Timing breakdown for the stochastic Galerkin solves in the stochastic-refinement study.}
\label{fig:stochastic-refinement-timing-breakdown}
\end{figure}

Table~\ref{tab:finite-size-work-solver-performance} and Figures~\ref{fig:work}--\ref{fig:stochastic-refinement-timing-breakdown} show the computational cost of the same stochastic-refinement experiment. In this corrected run we report the data up to the finite-chaos capture point \(p_\xi=q=6\). The number of stochastic modes increases from \(N_\xi=1\) to \(N_\xi=210\), and the slabwise SG size increases from \(5.92900\times10^5\) to \(1.24509\times10^8\) degrees of freedom. The average FGMRES iteration count rises from \(8.7031\) to \(31.0156\) iterations per slab. Thus stochastic enrichment increases both the algebraic dimension and the Krylov difficulty of the slab solve.

The work proxy \eqref{eq:work-proxy} grows from \(3.30245300\times10^8\) at \(p_\xi=0\) to \(2.47150365\times10^{11}\) at \(p_\xi=6\), i.e. by a factor of about \(7.48\times10^2\). The wall time grows from \(48.7720\,\mathrm{s}\) to \(25834.0100\,\mathrm{s}\), which is approximately \(7.18\) hours. The shape of Figure~\ref{fig:work} is monotone, as expected, because both \(N_{\mathrm{SG,slab}}\) and the number of preconditioner calls increase with \(p_\xi\). The local logarithmic growth of the work becomes milder for larger \(p_\xi\), reflecting the decreasing adjacent growth ratio of the total-degree basis dimension for fixed stochastic dimension \(M=4\).

The timing breakdown in Figure~\ref{fig:stochastic-refinement-timing-breakdown} shows that the solve time, wall time, and preconditioner time follow the same trend. The preconditioner dominates the linear solve. At \(p_\xi=6\), Table~\ref{tab:finite-size-work-solver-performance} gives \(\texttt{solve\_time}=22485.8000\,\mathrm{s}\) and \(\texttt{prec\_time}=21046.5600\,\mathrm{s}\); hence about \(93.6\%\) of the solve time is spent in preconditioner applications. By comparison, the operator-application time is \(1388.9440\,\mathrm{s}\). This identifies the block-Jacobi GMG preconditioner as the dominant computational cost component for the present implementation.

The finite benchmark has a sharp cutoff at \(p_\xi=q=6\). Hence the last point in the error plots should be interpreted as the exact finite-chaos capture point, not as part of an asymptotic stochastic decay regime. After this point, further stochastic enrichment would only increase the algebraic cost while the error remains controlled by the fixed deterministic discretization and the algebraic solver tolerance.

\section{Monte-Carlo approximation of the stochastic heat problem}
\label{sec:monte-carlo-method}

The intrusive stochastic Galerkin method developed above treats the random dependence by expanding the solution in the Hermite chaos basis and solving a single coupled deterministic system for all retained stochastic modes. In order to assess this approach and to put the stochastic Galerkin results into context, we also consider a classical Monte-Carlo strategy for the same stochastic heat problem introduced in \eqref{eq:heat_ic}.

The purpose of the Monte-Carlo method is different from that of the intrusive stochastic Galerkin method. Instead of constructing a global polynomial approximation in the stochastic variables, Monte-Carlo samples the random input repeatedly. For each realization of the random vector, the stochastic heat equation becomes a deterministic heat equation with a fixed diffusion coefficient. These deterministic problems are solved independently by the same space-time finite element method, and the stochastic quantities of interest defined in Subsection~\ref{subsec:qoi} are then approximated by empirical averages.

This makes Monte-Carlo a natural reference method. It is non-intrusive, straightforward to parallelize, and robust with respect to the stochastic dimension. Its main limitation is the slow sampling rate \(N_{\mathrm{MC}}^{-1/2}\), which is independent of the regularity of the solution with respect to the random variables; see \cite{Sullivan2015UQ,Caflisch1998MC,Xiu2010Book}. Hence, comparing Monte-Carlo with stochastic Galerkin allows us to compare two fundamentally different philosophies: sampling many deterministic realizations versus solving one coupled spectral problem in the stochastic space.

\subsection{Pathwise deterministic problem}
\label{subsec:mc-pathwise-problem}

Let
\(
\bm{\xi}^{(m)}
=
\left(\xi_1^{(m)},\ldots,\xi_M^{(m)}\right),
\qquad
m=1,\ldots,N_{\mathrm{MC}},
\)
be independent samples drawn from the product Gaussian density \(\rho\). For a fixed sample \(\bm{\xi}^{(m)}\), the random coefficient in \eqref{eq:heat_strong} becomes the deterministic coefficient \(
a^{(m)}(\bm{x}, t)
:=
a(\bm{x}, t, \bm{\xi}^{(m)}).
\)
Therefore, the pathwise deterministic problem is written as follows.
\begin{problem}[Pathwise deterministic Monte-Carlo problem]
\label{prob:mc-pathwise-deterministic}
For a fixed realization \(\bm{\xi}^{(m)}\), find
\[
u^{(m)}(\bm{x}, t)
:=
u(\bm{x}, t, \bm{\xi}^{(m)})
\]
such that
\begin{equation}
\partial_t u^{(m)}
-
\nabla_{\bm{x}}\cdot
\left(
a^{(m)}(\bm{x}, t)\nabla_{\bm{x}}u^{(m)}
\right)
=
f^{(m)}(\bm{x}, t)
\qquad
\text{in }(0,T]\times D,
\label{eq:mc-pathwise-strong}
\end{equation}
with the same initial and boundary conditions as in \eqref{eq:heat_ic}. Here
\[
f^{(m)}(\bm{x}, t)
:=
f(\bm{x}, t, \bm{\xi}^{(m)}).
\]

The corresponding pathwise weak formulation is: for almost every
\(t\in(0,T]\), find \(u^{(m)}(t)\in V:=H_0^1(D)\) such that
\begin{equation}
\left\langle
\partial_t u^{(m)}(t),v
\right\rangle_{V',V}
+
\int_D
a^{(m)}(\bm{x}, t)
\nabla_{\bm{x}}u^{(m)}(\bm{x}, t)
\cdot
\nabla_{\bm{x}}v(\bm{x})
\,d\bm{x}
=
\left(
f^{(m)}(t),v
\right)_{L^2(D)}
\label{eq:mc-pathwise-weak}
\end{equation}
for all \(v\in V\).
\end{problem}

\begin{definition}[Monte-Carlo sample problem]
\label{def:mc-sample-problem} A Monte-Carlo sample problem is the deterministic parabolic problem \eqref{eq:mc-pathwise-strong}--\eqref{eq:mc-pathwise-weak} obtained by fixing one realization \(\bm{\xi}^{(m)}\) of the random input. The Monte-Carlo method solves \(N_{\mathrm{MC}}\) such deterministic problems independently.
\end{definition}

\begin{remark}
The Monte-Carlo method does not require the Hermite triple-product matrices or the coupled stochastic Galerkin operator. Each sample is solved by the deterministic space-time solver already used inside the deterministic building blocks of the stochastic Galerkin method. This makes Monte-Carlo a useful reference implementation, but its convergence with respect to
\(N_{\mathrm{MC}}\) is only of order \(N_{\mathrm{MC}}^{-1/2}\).
\end{remark}

\paragraph{Space-time discretization of the Monte-Carlo samples}
\label{para:mc-spacetime-discretization}

For each realization \(\bm{\xi}^{(m)}\), the Monte-Carlo method uses the same deterministic space-time discretization as the stochastic Galerkin method. The spatial approximation is based on the conforming finite element space \(V_h\subset H_0^1(D)\) introduced in Subsection~\ref{subsec:sg-spatial-discretization}, while the time discretization uses the same scalar dG time space \(S_\tau\) and the identification
\(
T_\tau^{(r)}(V_h)\cong S_\tau\otimes V_h
\)
from Subsection~\ref{subsec:sg-time-discretization}. Thus, on each time slab
\(I_n=(t_{n-1},t_n]\), the sampled solution satisfies
\(
u_{h,\tau}^{(m)}\big|_{I_n}\in \mathbb P_r(I_n;V_h),
\)
and can be written locally as
\begin{equation}
u_{h,\tau}^{(m)}\big|_{I_n}(\bm{x},t)
=
\sum_{i=0}^{r}
\sum_{j=1}^{N_h}
U_{i,j}^{(m,n)}\,v_i^{(n)}(t)\,\phi_j(\bm{x}).
\label{eq:mc-local-expansion}
\end{equation}

The difference between the Monte-Carlo and stochastic Galerkin approaches is therefore not the deterministic space-time discretization, but the treatment of the stochastic variable. In the stochastic Galerkin method, the discrete space is enlarged to
\( X_{h,\tau,p_\xi}^{\mathrm{SG}}
=
S_\tau\otimes V_h\otimes \mathcal P_{p_\xi},
\)
and the retained Hermite modes are coupled through the stochastic Galerkin projection, as described in Subsections~\ref{subsec:sg-fully-discrete-space} and \ref{subsec:sg-slabwise-dg-formulation}. In the Monte-Carlo method, by contrast, no stochastic Galerkin space and no stochastic coupling matrices are introduced. For each sample \(\bm{\xi}^{(m)}\), one solves a deterministic space-time problem in
\(
X_{h,\tau}^{\mathrm{MC}}=S_\tau\otimes V_h
\)
with the sampled coefficient \(a(\bm{x},\bm{\xi}^{(m)})\) and the corresponding sampled right-hand side. Hence the randomness enters only through the sampled data and the resulting sample solution \(u_{h,\tau}^{(m)}\). The Monte-Carlo solves are independent over the sample index \(m\), whereas the stochastic Galerkin method solves one coupled system for all retained stochastic modes. Using the temporal matrices \(K_{\tau,n}\), \(M_{\tau,n}\), and \(C_{\tau,n}\) from the \(\mathrm{dG}(r)\) space-time formulation, the sample-wise algebraic system on \(I_n\) takes the tensor-product form
\begin{equation}
\left(
K_{\tau,n}\otimes M_h
+
M_{\tau,n}\otimes A_{h,n}^{(m)}
\right)
U_n^{(m)}
=
F_n^{(m)}
+
\left(
C_{\tau,n}\otimes M_h
\right)
U_{n-1}^{(m)} .
\label{eq:mc-slab-system}
\end{equation}
Here we use the same notation as in \eqref{eq:At}-\eqref{eq:local_rhs_tensor}, but without stochastic coupling. For each Monte-Carlo sample \(m\), the operator \(A_{h,n}^{(m)}\) is the deterministic spatial diffusion operator assembled with the sampled coefficient \(a(\bm{x},\bm{\xi}^{(m)})\), and the slab system is solved independently of all other samples.
\subsection{Monte-Carlo estimators for the quantities of interest}
\label{subsec:mc-estimators-qoi}

After solving the pathwise systems \eqref{eq:mc-slab-system} for \(m=1,\ldots,N_{\mathrm{MC}}\), we approximate the stochastic quantities of interest from Subsection~\ref{subsec:qoi}. At a fixed time \(t\), the Monte-Carlo estimator for the mean field \(\mathcal{M}[u]\) in
\eqref{eq:qoi-mean-definition} is
\begin{equation}
\widehat{\mathcal{M}}_{N_{\mathrm{MC}}}[u_{h,\tau}](\bm{x}, t)
:=
\frac{1}{N_{\mathrm{MC}}}
\sum_{m=1}^{N_{\mathrm{MC}}}
u_{h,\tau}^{(m)}(\bm{x}, t).
\label{eq:mc-mean-estimator}
\end{equation}
The corresponding unbiased estimator for the variance field
\(\mathcal{V}[u]\) in \eqref{eq:qoi-variance-definition} is
\begin{equation}
\widehat{\mathcal{V}}_{N_{\mathrm{MC}}}[u_{h,\tau}](\bm{x}, t)
:=
\frac{1}{N_{\mathrm{MC}}-1}
\sum_{m=1}^{N_{\mathrm{MC}}}
\left(
u_{h,\tau}^{(m)}(\bm{x}, t)
-
\widehat{\mathcal{M}}_{N_{\mathrm{MC}}}[u_{h,\tau}](\bm{x}, t)
\right)^2 .
\label{eq:mc-variance-estimator-unbiased}
\end{equation}
Equivalently, one often uses the second-moment form
\begin{equation}
\widehat{\mathcal{V}}_{N_{\mathrm{MC}}}^{\,\mathrm{biased}}[u_{h,\tau}]
=
\frac{1}{N_{\mathrm{MC}}}
\sum_{m=1}^{N_{\mathrm{MC}}}
\left(
u_{h,\tau}^{(m)}
\right)^2
-
\left(
\widehat{\mathcal{M}}_{N_{\mathrm{MC}}}[u_{h,\tau}]
\right)^2 .
\label{eq:mc-variance-estimator-biased}
\end{equation}
The difference between \eqref{eq:mc-variance-estimator-unbiased} and \eqref{eq:mc-variance-estimator-biased} is of order \(N_{\mathrm{MC}}^{-1}\). For statistical error analysis, we use the unbiased estimator \eqref{eq:mc-variance-estimator-unbiased}.

\begin{definition}[Monte-Carlo approximation of the QoIs]
\label{def:mc-qoi-approximation}
The Monte-Carlo approximations of the mean and variance fields are the empirical fields
\[
\widehat{\mathcal{M}}_{N_{\mathrm{MC}}}[u_{h,\tau}]
\quad\text{and}\quad
\widehat{\mathcal{V}}_{N_{\mathrm{MC}}}[u_{h,\tau}]
\]
defined by \eqref{eq:mc-mean-estimator} and
\eqref{eq:mc-variance-estimator-unbiased}. These are the Monte-Carlo
counterparts of the exact quantities \(\mathcal{M}[u]\) and
\(\mathcal{V}[u]\) defined in \eqref{eq:qoi-mean-definition} and
\eqref{eq:qoi-variance-definition}.
\end{definition}

The corresponding numerical Monte-Carlo errors are measured with the deterministic space-time norms from Subsection~\ref{subsec:error-norms}. Since the estimators in \eqref{eq:mc-mean-estimator} and \eqref{eq:mc-variance-estimator-unbiased} are formed from the fully discrete pathwise solutions, their direct comparison with the exact quantities \(\mathcal{M}[u]\) and \(\mathcal{V}[u]\) gives the total numerical Monte-Carlo error. In the \(L^2_t(L^2_x)\)-norm we define
\begin{align}
E_{N_{\mathrm{MC}},h,\tau}^{\mathrm{tot,mean},L^2}
:=
\left\|
\widehat{\mathcal{M}}_{N_{\mathrm{MC}}}[u_{h,\tau}]
-
\mathcal{M}[u]
\right\|_{L^2_t(L^2_x)}\  \ , \ \ \ 
E_{N_{\mathrm{MC}},h,\tau}^{\mathrm{tot,var},L^2}
:=
\left\|
\widehat{\mathcal{V}}_{N_{\mathrm{MC}}}[u_{h,\tau}]
-
\mathcal{V}[u]
\right\|_{L^2_t(L^2_x)}.
\label{eq:mc-error-mean-L2,mc-error-var-L2}
\end{align}
Analogous \(H^1\)-seminorm versions are defined using
\eqref{eq:deterministic-L2t-H1semi-norm} whenever the variance field is
sufficiently smooth.

\subsection{Sampling error and root-\(N_{\mathrm{MC}}\) convergence}
\label{subsec:mc-root-n}

The central theoretical property of standard Monte-Carlo sampling is that sampling errors decay like \(N_{\mathrm{MC}}^{-1/2}\). This rate is independent of the stochastic dimension, but it is also slow compared with the spectral or stretched-exponential decay that one may obtain from stochastic Galerkin when the solution depends smoothly on the random variables.

The following result states the classical root-\(N_{\mathrm{MC}}\) behavior for the Monte-Carlo mean estimator; see \cite{Sullivan2015UQ,Caflisch1998MC,Xiu2010Book}. It applies to Hilbert-space-valued random variables and therefore covers field-valued quantities such as \(u(\cdot,t,\bm{\xi})\), or after integration in time, the corresponding space-time fields. The following Lemma is a standard Hilbert-space-valued Monte-Carlo mean-square estimate (see, e.g., \cite[Lemma~2.1]{barthLang2012}).

\begin{lemma}
\label{lem:mc-mean-rmse}
Let \(U\) be a Hilbert-space-valued random variable with \(U\in L^2(\Gamma,\rho;X)\), where \(X\) is a Hilbert space such as \(L^2(D)\) or \(L^2(0,T;L^2(D))\). Let
\[
\mathcal{M}[U]=\mathbb{E}_{\rho}[U].
\]
For independent samples \(U^{(m)}=U(\bm{\xi}^{(m)})\), define
\[
\widehat{\mathcal{M}}_{N_{\mathrm{MC}}}[U]
=
\frac{1}{N_{\mathrm{MC}}}
\sum_{m=1}^{N_{\mathrm{MC}}}
U^{(m)}.
\]
Then
\[
\mathbb{E}_{\rho}
\left[
\widehat{\mathcal{M}}_{N_{\mathrm{MC}}}[U]
\right]
=
\mathcal{M}[U],
\]
and
\begin{equation}
\mathbb{E}_{\rho}
\left[
\left\|
\widehat{\mathcal{M}}_{N_{\mathrm{MC}}}[U]
-
\mathcal{M}[U]
\right\|_{X}^2
\right]
=
\frac{1}{N_{\mathrm{MC}}}
\mathbb{E}_{\rho}
\left[
\left\|
U-\mathcal{M}[U]
\right\|_{X}^2
\right].
\label{eq:mc-mean-rmse-rootN}
\end{equation}
Consequently,
\[
\operatorname{RMSE}
\left(
\widehat{\mathcal{M}}_{N_{\mathrm{MC}}}[U]
\right)
=
O(N_{\mathrm{MC}}^{-1/2}).
\]
\end{lemma}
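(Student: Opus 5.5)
The plan is to center the samples and expand the squared Hilbert norm through the inner product of \(X\), exactly as in the scalar case. Unbiasedness comes first. Each sample \(U^{(m)}\) has the same law as \(U\), so \(\mathbb E_\rho[U^{(m)}]=\mathcal M[U]\) as a Bochner integral in \(X\). This expectation is well defined because \(U\in L^2(\Gamma,\rho;X)\subset L^1(\Gamma,\rho;X)\), the probability measure being finite. Linearity of the Bochner integral then gives \(\mathbb E_\rho[\widehat{\mathcal M}_{N_{\mathrm{MC}}}[U]]=\mathcal M[U]\).

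For the mean-square identity, I will set \(Z_m:=U^{(m)}-\mathcal M[U]\). These are i.i.d. \(X\)-valued random variables with \(\mathbb E[Z_m]=0\) and \(\mathbb E\|Z_m\|_X^2=\mathbb E_\rho\|U-\mathcal M[U]\|_X^2<\infty\). The error is \(\widehat{\mathcal M}_{N_{\mathrm{MC}}}[U]-\mathcal M[U]=N_{\mathrm{MC}}^{-1}\sum_m Z_m\). Expanding the norm gives
\[
\Bigl\|\frac1{N_{\mathrm{MC}}}\sum_{m}Z_m\Bigr\|_X^2=\frac1{N_{\mathrm{MC}}^2}\Bigl(\sum_m\|Z_m\|_X^2+\sum_{m\neq k}\langle Z_m,Z_k\rangle_X\Bigr).
\]
Taking expectations, the diagonal terms contribute \(N_{\mathrm{MC}}\,\mathbb E\|U-\mathcal M[U]\|_X^2\). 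Once the cross terms are shown to vanish, this yields \eqref{eq:mc-mean-rmse-rootN}. The RMSE statement then follows by taking square roots: \(\operatorname{RMSE}=N_{\mathrm{MC}}^{-1/2}\bigl(\mathbb E_\rho\|U-\mathcal M[U]\|_X^2\bigr)^{1/2}\).

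The main step needing care is showing \(\mathbb E\langle Z_m,Z_k\rangle_X=0\) for \(m\neq k\) in the infinite-dimensional setting. I would first check integrability: by Cauchy--Schwarz, \(|\langle Z_m,Z_k\rangle_X|\le\|Z_m\|_X\|Z_k\|_X\), and this bound is integrable because the two factors are independent with finite second moments. Next, I would use the separability of \(X\), which holds for \(L^2(D)\) and \(L^2(0,T;L^2(D))\), and fix an orthonormal basis \(\{e_j\}\). This gives \(\langle Z_m,Z_k\rangle_X=\sum_j\langle Z_m,e_j\rangle\langle Z_k,e_j\rangle\). Each summand is a product of independent scalar random variables with mean zero, since \(\mathbb E\langle Z_m,e_j\rangle=\langle\mathbb E Z_m,e_j\rangle=0\) because bounded linear functionals commute with the Bochner integral. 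Each summand therefore has expectation zero. Exchanging expectation and summation is justified by dominated convergence: the partial sums are bounded by \(\|Z_m\|_X\|Z_k\|_X\), by Cauchy--Schwarz in \(\ell^2\). An alternative route is to condition on \(Z_k\) and use \(\mathbb E[\langle Z_m,z\rangle_X]=0\) for fixed \(z\) together with Fubini on the product measure of the independent pair; I would mention this as a check. Either way, the scalar argument carries over directly, which is consistent with the paper's citation of \cite[Lemma~2.1]{barthLang2012}.
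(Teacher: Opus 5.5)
Your proposal is correct and follows the same route as the paper. In both, the samples are centered, the squared norm of the average is expanded, the cross terms are removed using independence and zero mean, and square roots give the RMSE. The only difference is that you actually justify $\mathbb E\langle Z_m,Z_k\rangle_X=0$ in the infinite-dimensional setting, via integrability by Cauchy--Schwarz, a Parseval expansion in an orthonormal basis, and dominated convergence; the paper simply asserts this step.
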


\begin{proof}
Unbiasedness follows immediately from linearity of expectation. For the
mean-square error, write
\[
U^{(m)}-\mathcal{M}[U]
\]
as independent, centered Hilbert-space-valued random variables. Expanding the
squared norm of their average, all cross terms vanish by independence and
zero mean. Therefore,
\[
\mathbb{E}_{\rho}
\left[
\left\|
\frac{1}{N_{\mathrm{MC}}}
\sum_{m=1}^{N_{\mathrm{MC}}}
\left(
U^{(m)}-\mathcal{M}[U]
\right)
\right\|_{X}^2
\right]
=
\frac{1}{N_{\mathrm{MC}}^2}
\sum_{m=1}^{N_{\mathrm{MC}}}
\mathbb{E}_{\rho}
\left[
\left\|
U^{(m)}-\mathcal{M}[U]
\right\|_{X}^2
\right],
\]
which gives \eqref{eq:mc-mean-rmse-rootN}.
\end{proof}

The unbiasedness and root-mean-square behavior of the sample variance estimator follow from the classical fourth-moment formula for the sample variance, (see, e.g., \cite{kenneyKeeping1951,bierigChernov2016}).
\begin{lemma}
\label{lem:mc-variance-rmse}
Assume that the random field has a finite fourth moment. Then the sample variance estimator \eqref{eq:mc-variance-estimator-unbiased} is pointwise unbiased for \(\mathcal{V}[u_{h,\tau}]\), and its root-mean-square sampling error satisfies
\[
\operatorname{RMSE}
\left(
\widehat{\mathcal{V}}_{N_{\mathrm{MC}}}[u_{h,\tau}]
\right)
=
O(N_{\mathrm{MC}}^{-1/2}).
\]
The constant depends on fourth central moments of the sampled field.
\end{lemma}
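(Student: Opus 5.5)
The plan is to work pointwise in \((\bm x,t)\): fix a point and write \(Y^{(m)}:=u_{h,\tau}^{(m)}(\bm x,t)\), which are i.i.d. copies of the scalar random variable \(Y=u_{h,\tau}(\bm x,t,\bm\xi)\). Set \(\mu:=\mathbb E_\rho[Y]\), \(\sigma^2:=\mathcal V[u_{h,\tau}](\bm x,t)\) and \(\mu_4:=\mathbb E_\rho[(Y-\mu)^4]\), which is finite by assumption. The argument then proceeds in three steps.

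\emph{Unbiasedness.} Center the samples, \(Z^{(m)}:=Y^{(m)}-\mu\), and use the identity
\[
\sum_{m=1}^{N_{\mathrm{MC}}}\bigl(Y^{(m)}-\overline Y\bigr)^2=\sum_{m=1}^{N_{\mathrm{MC}}}(Z^{(m)})^2-N_{\mathrm{MC}}\,\overline Z^{\,2}.
\]
Taking expectations gives \(\mathbb E[\overline Z^{\,2}]=\sigma^2/N_{\mathrm{MC}}\); this is exactly Lemma~\ref{lem:mc-mean-rmse} with \(X=\mathbb R\). Hence the sum has expectation \((N_{\mathrm{MC}}-1)\sigma^2\), and dividing by \(N_{\mathrm{MC}}-1\) yields \(\mathbb E[\widehat{\mathcal V}_{N_{\mathrm{MC}}}]=\sigma^2\).

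\emph{Mean-square error.} Because the estimator is unbiased, its MSE equals its variance. I will compute the second moment of the quadratic form \(\sum_m (Z^{(m)})^2-N_{\mathrm{MC}}\overline Z^{\,2}\) by expanding into sums over index quadruples. Only terms in which every index appears at least twice survive, by independence and \(\mathbb E Z=0\). This is the main obstacle: it is the careful bookkeeping of index patterns. The pattern \(iiii\) contributes \(\mu_4\), and the patterns \(iijj\) contribute \(\sigma^4\). Collecting terms should give the classical formula
\[
\operatorname{Var}\bigl(\widehat{\mathcal V}_{N_{\mathrm{MC}}}\bigr)=\frac{1}{N_{\mathrm{MC}}}\Bigl(\mu_4-\frac{N_{\mathrm{MC}}-3}{N_{\mathrm{MC}}-1}\,\sigma^4\Bigr).
\]
For the purpose of an \(O(N_{\mathrm{MC}}^{-1/2})\) statement, an upper bound \(C(\mu_4+\sigma^4)/N_{\mathrm{MC}}\) suffices. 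That bound is obtained more simply by writing \(\widehat{\mathcal V}-\sigma^2\) as \(\frac{N_{\mathrm{MC}}}{N_{\mathrm{MC}}-1}\bigl(\frac1{N_{\mathrm{MC}}}\sum_m((Z^{(m)})^2-\sigma^2)\bigr)\) minus \(\frac{N_{\mathrm{MC}}}{N_{\mathrm{MC}}-1}\overline Z^{\,2}\) plus an \(O(\sigma^2/N_{\mathrm{MC}})\) bias-correction term. The first piece is handled by Lemma~\ref{lem:mc-mean-rmse} applied to \(Z^2\), which has variance at most \(\mu_4\). The second piece satisfies \(\mathbb E[\overline Z^{\,4}]=O((\mu_4+\sigma^4)/N_{\mathrm{MC}}^2)\) by the same index-counting, now with only the easy patterns.

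\emph{Conclusion.} Taking square roots gives \(\operatorname{RMSE}=O(N_{\mathrm{MC}}^{-1/2})\), with a constant depending on \(\mu_4\) and \(\sigma^4\le\mu_4\), that is, on the fourth central moment, as claimed. If a field-norm version is desired, the pointwise bound can be integrated over \(D\times I\) by Fubini, provided \(\mu_4(\bm x,t)\) is integrable.
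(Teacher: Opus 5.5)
Your proposal is correct. The paper gives no argument of its own for this lemma. It only invokes the classical fourth-moment formula $\operatorname{Var}(\widehat{\mathcal V}_{N_{\mathrm{MC}}})=\frac{1}{N_{\mathrm{MC}}}\bigl(\mu_4-\frac{N_{\mathrm{MC}}-3}{N_{\mathrm{MC}}-1}\sigma^4\bigr)$ from the literature. That is exactly the identity your index bookkeeping targets, so your first route is the paper's route made explicit. The formula you wrote is right: expanding $\mathbb E[(\sum_m Z_m^2-N\bar Z^{2})^2]$ with the surviving $iiii$ and $iijj$ patterns does give it.

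Your shortcut is a genuinely more self-contained argument. Write $N=N_{\mathrm{MC}}\ge 2$ and split $\widehat{\mathcal V}_{N}-\sigma^2$ into three pieces: $\frac{N}{N-1}$ times a Monte-Carlo mean of $Z^2-\sigma^2$, minus $\frac{N}{N-1}\bar Z^{2}$, plus the deterministic $\sigma^2/(N-1)$. Combining them by Minkowski in $L^2_\rho$ needs only two ingredients: Lemma~\ref{lem:mc-mean-rmse} with $X=\mathbb R$ applied to $Z^2\in L^2_\rho$, and the elementary moment $\mathbb E[\bar Z^{4}]=\bigl(N\mu_4+3N(N-1)\sigma^4\bigr)/N^4$. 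If you keep $\operatorname{Var}(Z^2)=\mu_4-\sigma^4$ instead of bounding it by $\mu_4$, the shortcut even reproduces the sharp leading term $\sqrt{(\mu_4-\sigma^4)/N}$, since the other two pieces are $O(N^{-1})$. What the exact formula adds is only the precise finite-$N$ value.

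Your closing Tonelli remark also supplies a step the paper leaves implicit. It passes from the pointwise statement to the $L^2_t(L^2_x)$-norm RMSE in which the Monte-Carlo sampling errors are actually measured. This holds under the extra hypothesis $\int_I\int_D\mu_4\,\mathrm d\bm x\,\mathrm dt<\infty$, and your pointwise constant is absolute.
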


\subsection{Discretization, sampling, and total Monte-Carlo errors}
\label{subsec:mc-error-decomposition}

The total errors defined in \eqref{eq:mc-error-mean-L2,mc-error-var-L2} and \eqref{eq:mc-error-mean-L2,mc-error-var-L2} contain two distinct effects. The first is the deterministic space-time discretization error of the pathwise sample problems. The second is the statistical sampling error caused by using only finitely many random samples. For a meaningful interpretation of the numerical results, these contributions have to be separated.

This separation is especially useful for prescribed solutions. In that case the exact sampled solution \(u(\bm{x},t,\bm{\xi}^{(m)})\) is known analytically, so one can form Monte-Carlo estimators both from the exact sampled fields and from the fully discrete fields. We define the exact sampled mean by
\begin{equation}
\widehat{\mathcal{M}}_{N_{\mathrm{MC}}}[u](\bm{x},t)
:=
\frac{1}{N_{\mathrm{MC}}}
\sum_{m=1}^{N_{\mathrm{MC}}}
u(\bm{x},t,\bm{\xi}^{(m)}),
\label{eq:mc-exact-sampled-mean-estimator}
\end{equation}
and the exact sampled variance by
\begin{equation}
\widehat{\mathcal{V}}_{N_{\mathrm{MC}}}[u](\bm{x},t)
:=
\frac{1}{N_{\mathrm{MC}}-1}
\sum_{m=1}^{N_{\mathrm{MC}}}
\left(
u(\bm{x},t,\bm{\xi}^{(m)})
-
\widehat{\mathcal{M}}_{N_{\mathrm{MC}}}[u](\bm{x},t)
\right)^2 .
\label{eq:mc-exact-sampled-var-estimator}
\end{equation}
These are the exact-solution counterparts of the discrete estimators
\(\widehat{\mathcal{M}}_{N_{\mathrm{MC}}}[u_{h,\tau}]\) and
\(\widehat{\mathcal{V}}_{N_{\mathrm{MC}}}[u_{h,\tau}]\) defined in
\eqref{eq:mc-mean-estimator} and
\eqref{eq:mc-variance-estimator-unbiased}.

For the mean field, we define the discretization error on the same Monte-Carlo sample set by
\begin{equation}
E_{N_{\mathrm{MC}},h,\tau}^{\mathrm{disc,mean},L^2}
:=
\left\|
\widehat{\mathcal{M}}_{N_{\mathrm{MC}}}[u_{h,\tau}]
-
\widehat{\mathcal{M}}_{N_{\mathrm{MC}}}[u]
\right\|_{L^2_t(L^2_x)}.
\label{eq:mc-disc-mean-error}
\end{equation}
The pure Monte-Carlo sampling error of the exact sampled mean is
\begin{equation}
E_{N_{\mathrm{MC}}}^{\mathrm{MC,mean},L^2}
:=
\left\|
\widehat{\mathcal{M}}_{N_{\mathrm{MC}}}[u]
-
\mathcal{M}[u]
\right\|_{L^2_t(L^2_x)}.
\label{eq:mc-pure-sampling-mean-error}
\end{equation}
Together with the total mean error
\(E_{N_{\mathrm{MC}},h,\tau}^{\mathrm{tot,mean},L^2}\) already defined in
\eqref{eq:mc-error-mean-L2,mc-error-var-L2}, these are the three mean-error quantities used in
the Monte-Carlo study.

Analogously, for the variance field, the discretization error on the sampled
variance field is
\begin{equation}
E_{N_{\mathrm{MC}},h,\tau}^{\mathrm{disc,var},L^2}
:=
\left\|
\widehat{\mathcal{V}}_{N_{\mathrm{MC}}}[u_{h,\tau}]
-
\widehat{\mathcal{V}}_{N_{\mathrm{MC}}}[u]
\right\|_{L^2_t(L^2_x)}.
\label{eq:mc-disc-var-error}
\end{equation}
The pure Monte-Carlo sampling error of the exact sampled variance is
\begin{equation}
E_{N_{\mathrm{MC}}}^{\mathrm{MC,var},L^2}
:=
\left\|
\widehat{\mathcal{V}}_{N_{\mathrm{MC}}}[u]
-
\mathcal{V}[u]
\right\|_{L^2_t(L^2_x)}.
\label{eq:mc-pure-sampling-var-error}
\end{equation}
Together with the total variance error
\(E_{N_{\mathrm{MC}},h,\tau}^{\mathrm{tot,var},L^2}\) already defined in
\eqref{eq:mc-error-mean-L2,mc-error-var-L2}, these are the three variance-error quantities used
in the Monte-Carlo study.

The total mean error admits the exact sample-wise splitting
\begin{equation}
\begin{aligned}
\widehat{\mathcal{M}}_{N_{\mathrm{MC}}}[u_{h,\tau}]
-
\mathcal{M}[u]
=
\left(
\widehat{\mathcal{M}}_{N_{\mathrm{MC}}}[u_{h,\tau}]
-
\widehat{\mathcal{M}}_{N_{\mathrm{MC}}}[u]
\right)
\quad+
\left(
\widehat{\mathcal{M}}_{N_{\mathrm{MC}}}[u]
-
\mathcal{M}[u]
\right).
\end{aligned}
\label{eq:mc-total-mean-sample-splitting}
\end{equation}
Therefore, by the triangle inequality,
\begin{equation}
E_{N_{\mathrm{MC}},h,\tau}^{\mathrm{tot,mean},L^2}
\le
E_{N_{\mathrm{MC}},h,\tau}^{\mathrm{disc,mean},L^2}
+
E_{N_{\mathrm{MC}}}^{\mathrm{MC,mean},L^2}.
\label{eq:mc-total-mean-triangle}
\end{equation}
Similarly, adding and subtracting the exact sampled variance field gives
\begin{equation}
\begin{aligned}
\widehat{\mathcal{V}}_{N_{\mathrm{MC}}}[u_{h,\tau}]
-
\mathcal{V}[u] =
\left(
\widehat{\mathcal{V}}_{N_{\mathrm{MC}}}[u_{h,\tau}]
-
\widehat{\mathcal{V}}_{N_{\mathrm{MC}}}[u]
\right) \quad+
\left(
\widehat{\mathcal{V}}_{N_{\mathrm{MC}}}[u]
-
\mathcal{V}[u]
\right),
\end{aligned}
\label{eq:mc-total-var-sample-splitting}
\end{equation}
and hence
\begin{equation}
E_{N_{\mathrm{MC}},h,\tau}^{\mathrm{tot,var},L^2}
\le
E_{N_{\mathrm{MC}},h,\tau}^{\mathrm{disc,var},L^2}
+
E_{N_{\mathrm{MC}}}^{\mathrm{MC,var},L^2}.
\label{eq:mc-total-var-triangle}
\end{equation}
Although the variance estimator is nonlinear in the sampled fields, the decomposition above is still valid because it is obtained by adding and subtracting the same exact sampled variance field.

The pure sampling errors
\(E_{N_{\mathrm{MC}}}^{\mathrm{MC,mean},L^2}\) and \(E_{N_{\mathrm{MC}}}^{\mathrm{MC,var},L^2}\) are the quantities expected to exhibit the classical Monte-Carlo rate $O(N_{\mathrm{MC}}^{-1/2}) $ in root-mean-square, according to Lemma~\ref{lem:mc-mean-rmse} and Lemma~\ref{lem:mc-variance-rmse}. In contrast, the discretization errors \(E_{N_{\mathrm{MC}},h,\tau}^{\mathrm{disc,mean},L^2}\) and \(E_{N_{\mathrm{MC}},h,\tau}^{\mathrm{disc,var},L^2}\) are controlled by the deterministic space-time discretization parameters \(h\), \(\tau\), and the polynomial degrees in space and time. The total errors \(E_{N_{\mathrm{MC}},h,\tau}^{\mathrm{tot,mean},L^2}\) and \(E_{N_{\mathrm{MC}},h,\tau}^{\mathrm{tot,var},L^2}\) contain both effects and therefore reveal which contribution dominates the actual numerical Monte-Carlo approximation.

The preceding definitions are sample-wise and are therefore directly computable in the prescribed analytical solution's benchmark. For theoretical error estimates, it is also useful to formulate the decomposition at the expectation level. Let
\[
m(\bm{x}, t)
:=
\mathcal{M}[u](\bm{x}, t),
\ \ \text{and } \
m_{h,\tau}(\bm{x}, t)
:=
\mathcal{M}[u_{h,\tau}](\bm{x}, t)
\]
be the exact mean of the continuous solution and the exact mean of the fully discrete pathwise solution respectively. Then
\begin{equation}
\begin{aligned}
\widehat{\mathcal{M}}_{N_{\mathrm{MC}}}[u_{h,\tau}]
-
m
=
\underbrace{
\left(
m_{h,\tau}-m
\right)
}_{\text{deterministic discretization bias}}
\quad+
\underbrace{
\left(
\widehat{\mathcal{M}}_{N_{\mathrm{MC}}}[u_{h,\tau}]
-
m_{h,\tau}
\right)
}_{\text{Monte-Carlo sampling error of the discrete field}} .
\end{aligned}
\label{eq:mc-mean-error-splitting}
\end{equation}
Consequently,
\begin{equation}
\left(
\mathbb{E}_{\rho}
\left[
\left\|
\widehat{\mathcal{M}}_{N_{\mathrm{MC}}}[u_{h,\tau}]
-
m
\right\|_{L^2_t(L^2_x)}^2
\right]
\right)^{1/2}
\le
\|m_{h,\tau}-m\|_{L^2_t(L^2_x)}
+
\frac{C_{h,\tau}}{\sqrt{N_{\mathrm{MC}}}}.
\label{eq:mc-bias-sampling-bound}
\end{equation}
The first term is controlled by mesh refinement in \(h\) and \(\tau\), while the second term is controlled by the sample size \(N_{\mathrm{MC}}\). An analogous expectation-level decomposition holds for the variance error, with a constant depending on fourth central moments of the sampled field.

\begin{definition}[Sampling-dominated regime]
\label{def:sampling-dominated-regime}
We say that a Monte-Carlo experiment is in the sampling-dominated regime if
the deterministic bias $\|m_{h,\tau}-m\|_{L^2_t(L^2_x)}$ is much smaller than the sampling term \(C_{h,\tau}N_{\mathrm{MC}}^{-1/2}\).
\end{definition}

In this regime, increasing the space-time resolution does not visibly reduce the total error, and log--log plots of the error against \(N_{\mathrm{MC}}\) should reveal the theoretical slope \(-1/2\). Conversely, if the sampling error is much smaller than the deterministic space-time bias, then increasing \(N_{\mathrm{MC}}\) does not substantially reduce the total error. In that case the experiment is discretizatio dominated, and the total error can only be reduced by refining the deterministic space-time approximation.

\begin{remark}[Practical verification strategy]
For prescribed solutions, the exact mean and variance are often available analytically. Then one can verify the deterministic discretization by refining \(h\) and \(\tau\) while monitoring the discretization errors \eqref{eq:mc-disc-mean-error} and \eqref{eq:mc-disc-var-error}. The sampling rate is verified by fixing a sufficiently fine deterministic discretization and increasing \(N_{\mathrm{MC}}\), while monitoring the pure sampling errors \eqref{eq:mc-pure-sampling-mean-error} and \eqref{eq:mc-pure-sampling-var-error}. Finally, the total errors \eqref{eq:mc-error-mean-L2,mc-error-var-L2} and \eqref{eq:mc-error-mean-L2,mc-error-var-L2} show the accuracy of the actual numerical Monte-Carlo approximation. A comprehensive Monte-Carlo study should therefore report all three quantities: the discretization error, the pure sampling error, and the total error.
\end{remark}

\subsection{Monte-Carlo algorithm}
\label{subsec:mc-algorithm}

The Monte-Carlo procedure used in the numerical experiments follows directly from the pathwise formulation \eqref{eq:mc-pathwise-strong}--\eqref{eq:mc-pathwise-weak} and the sample-wise space-time discretization mentioned in Section-\ref{para:mc-spacetime-discretization}. For each realization of the random vector, one deterministic space-time finite element problem is solved. The resulting degrees of freedom are then used to construct the empirical quantities of interest and the corresponding error estimators.

The main steps are summarized in Algorithm~\ref{alg:mc-fem}. The algorithm is written at the level of the numerical workflow rather than at the level of low-level implementation details.

\begin{algorithm}[H]
\caption{Monte-Carlo space-time finite element procedure}
\label{alg:mc-fem}
\begin{algorithmic}[1]
\Require Number of samples \(N_{\mathrm{MC}}\), mesh parameter \(h\), time step
\(\tau\), polynomial degrees, stochastic dimension \(M\), and random density
\(\rho\).
\Ensure Empirical mean and variance fields, together with discretization,
pure sampling, and total error estimators.
\State Construct the spatial mesh \(\mathcal{T}_h\), the finite element space
\(V_h\), and the dG\((r)\) time discretization used in
\eqref{eq:mc-local-expansion}.
\State Initialize the accumulators for the empirical mean and variance fields
defined in \eqref{eq:mc-mean-estimator} and
\eqref{eq:mc-variance-estimator-unbiased}.
\For{\(m=1,\ldots,N_{\mathrm{MC}}\)}
    \State Draw an independent sample $\bm{\xi}^{(m)}\sim \rho .$
    \State Form the sampled coefficient \(a^{(m)}\) and sampled right-hand side
    \(f^{(m)}\) as in \eqref{eq:mc-pathwise-strong}.
    \State For each time slab \(I_n\), assemble and solve the deterministic
    slab system \eqref{eq:mc-slab-system}.
    \State Store or accumulate the resulting space-time degrees of freedom
    \(U_n^{(m)}\), which define the pathwise finite element solution
    \(u_{h,\tau}^{(m)}\) through \eqref{eq:mc-local-expansion}.
    \State If the prescribed exact solution is available, evaluate the
    corresponding exact sampled field \(u(\bm{x},t,\bm{\xi}^{(m)})\) on the
    same space-time quadrature structure.
\EndFor
\State Compute the empirical mean and variance fields $ \widehat{\mathcal{M}}_{N_{\mathrm{MC}}}[u_{h,\tau}], \ \widehat{\mathcal{V}}_{N_{\mathrm{MC}}}[u_{h,\tau}],$
using \eqref{eq:mc-mean-estimator} and \eqref{eq:mc-variance-estimator-unbiased} and $\widehat{\mathcal{M}}_{N_{\mathrm{MC}}}[u],\ \widehat{\mathcal{V}}_{N_{\mathrm{MC}}}[u],$ defined in \eqref{eq:mc-exact-sampled-mean-estimator} and \eqref{eq:mc-exact-sampled-var-estimator}.
\State Evaluate the discretization errors \eqref{eq:mc-disc-mean-error} and \eqref{eq:mc-disc-var-error}, the pure sampling errors \eqref{eq:mc-pure-sampling-mean-error} and
\eqref{eq:mc-pure-sampling-var-error}, and the total errors \eqref{eq:mc-error-mean-L2,mc-error-var-L2} and \eqref{eq:mc-error-mean-L2,mc-error-var-L2}.
\State \textbf{Return:} Report the results of the QoI. 
\end{algorithmic}
\end{algorithm}

The role of the degrees of freedom \(U_n^{(m)}\) is central. For each sample, they represent the pathwise finite element solution \(u_{h,\tau}^{(m)}\) on the space-time slab through \eqref{eq:mc-local-expansion}. The empirical mean and variance are therefore not computed from scalar sample outputs, but from the sampled finite element fields themselves. This makes the Monte-Carlo QoIs field-valued approximations of \(\mathcal{M}[u]\) and \(\mathcal{V}[u]\), as defined in \eqref{eq:qoi-mean-definition} and \eqref{eq:qoi-variance-definition}.

For the prescribed benchmark, the exact sampled fields are also available. This makes it possible to distinguish the three error mechanisms introduced in Subsection~\ref{subsec:mc-error-decomposition}. The discretization errors measure the difference between numerical and exact sampled estimators on the same sample set. The pure sampling errors measure the difference between the exact sampled estimators and the analytical QoIs. The total errors compare the numerical Monte-Carlo estimators directly with the analytical QoIs. This separation is essential for identifying whether a computation is dominated by space-time discretization error or by Monte-Carlo sampling error.\\

The Monte-Carlo method provides a complementary non-intrusive baseline for the intrusive stochastic Galerkin method. Both methods are applied to the same stochastic heat problem \eqref{eq:heat_ic}, but they represent the stochastic dependence in fundamentally different ways.

First, Monte-Carlo solves many independent pathwise deterministic heat problems. For each sample of the random input, the stochastic problem reduces to one deterministic space-time problem of the form \eqref{eq:mc-pathwise-strong}--\eqref{eq:mc-pathwise-weak}. The stochastic quantities are then recovered statistically from the ensemble of pathwise solutions. In particular, Monte-Carlo directly approximates the mean \(\mathcal{M}[u]\) defined in \eqref{eq:qoi-mean-definition} and the variance \(\mathcal{V}[u]\) defined in \eqref{eq:qoi-variance-definition} by the estimators \eqref{eq:mc-mean-estimator} and \eqref{eq:mc-variance-estimator-unbiased}.

This is different from the stochastic Galerkin method. In the stochastic Galerkin approach, the solution is represented by Hermite chaos modes, and therefore the mean and variance are obtained directly from the modal representation through \eqref{eq:qoi-mean-chaos} and \eqref{eq:qoi-variance-chaos}. Moreover, stochastic Galerkin also gives access to the full stochastic error measures \eqref{eq:error-full-L2}--\eqref{eq:error-full-H1}, because the full \(L^2_\rho\)-structure is represented explicitly in the Galerkin expansion. The Monte-Carlo comparison, however, is primarily performed at the level of the statistical quantities of interest, namely the mean and the variance, not at the level of the full stochastic Galerkin \(L^2_\rho\)-error norms.

Second, Monte-Carlo has the universal sampling rate $\operatorname{RMSE} = O(N_{\mathrm{MC}}^{-1/2}),$ as discussed in Lemma~\ref{lem:mc-mean-rmse} andLemma~\ref{lem:mc-variance-rmse}. This rate is independent of the stochastic dimension, but it is also slow. By contrast, the stochastic Galerkin method can show much faster decay with respect to the stochastic polynomial degree \(p_\xi\) when the prescribed solution has smooth (or analytic) dependence on the random variables.

Third, the two methods have different computational cost structures. Monte-Carlo requires many independent deterministic solves, whereas stochastic Galerkin requires one larger coupled spectral solve in the stochastic modes. Thus the comparison is not only a comparison of accuracy, but also a comparison of computational strategies:
\[
\text{many independent deterministic pathwise solves}
\qquad
\text{versus}
\qquad
\text{one coupled stochastic Galerkin solve}.
\]

In the following section, we present the numerical Monte-Carlo results and compare them with the stochastic Galerkin results at the level of the mean and variance quantities of interest. This comparison shows whether the two methods produce consistent statistical predictions for the same stochastic heat problem, while also highlighting their different convergence mechanisms: Monte-Carlo converges statistically with the classical \(N_{\mathrm{MC}}^{-1/2}\) rate, whereas stochastic Galerkin exploits the regularity of the stochastic dependence through its Hermite polynomial approximation.\\

\subsection{Monte-Carlo Numerical Results}
\label{subsec:mc-sampling-rate-diagnostics}

We now study the two error mechanisms of the Monte-Carlo approximation: the statistical sampling error and the deterministic space-time discretization error. The cumulative sampling diagnostics are evaluated on the finest deterministic refinement level,
\(
h=\tau=0.03125,
\)
for the sample windows
\[
N_{\mathrm{MC}}=100,\ 250,\ 500,\ 750,\ 1000, \ 2000, \ \ldots \ , 5000.
\]
This tests whether the observed Monte-Carlo errors are compatible with the classical root-sampling law. In addition, the deterministic refinement study uses five refinement levels,
\(
h=\tau\in\{0.5,\ 0.25,\ 0.125,\ 0.0625,\ 0.03125\},
\)
with \(N_{\mathrm{MC}}=1000\) samples. This second study separates the effect of deterministic space-time refinement from the finite-sample Monte-Carlo error.

To test the theoretical Monte-Carlo rate, we use the model
\begin{equation}
E_N \approx C N^{-1/2}.
\label{eq:mc-rootN-fit-model}
\end{equation}
For a fixed reference sample size \(N_0=100\), the observed rate over the window \(100\to N\) is computed as
\begin{equation}
r_{100\to N}
:=
\frac{\log(E_{100}/E_N)}{\log(N/100)}.
\label{eq:mc-observed-rate-window}
\end{equation}
The corresponding empirical constant in \eqref{eq:mc-rootN-fit-model} is \(C_N:=E_N\sqrt{N}\). For ideal \(N^{-1/2}\)-decay, the observed rates \(r_{100\to N}\) would be close to \(1/2\), and the constants \(C_N\) would be approximately independent of \(N\). Since the reported values come from one cumulative Monte-Carlo realization, however, monotone decay should not be expected. Oscillations are especially natural for the variance estimator, because it depends on second moments and its sampling constant is controlled by fourth central moments.

Table~\ref{tab:mc-total-exact-mean-var-rootN} reports the root-sampling
diagnostics for the total and pure exact-sampling mean and variance errors on
the finest refinement level \(h=\tau=0.03125\). The total and exact-sampling
columns agree to the displayed precision. This confirms that, on this
refinement level, the deterministic space-time discretization error is
negligible compared with the sampling error.

\begin{table}
\caption{Monte-Carlo root-sampling diagnostics for the total and pure
exact-sampling mean and variance errors for the extended
\(N_{\mathrm{MC}}=5000\) run at \(h=\tau=0.125\). The rates are computed with
respect to the reference value \(N_{\mathrm{MC}}=100\) using
\eqref{eq:mc-observed-rate-window}. The measured error quantities correspond
to \eqref{eq:mc-error-mean-L2,mc-error-var-L2}, \eqref{eq:mc-pure-sampling-mean-error} and \eqref{eq:mc-pure-sampling-var-error}}
\label{tab:mc-total-exact-mean-var-rootN}

\scriptsize
\setlength{\tabcolsep}{2.2pt}
\renewcommand{\arraystretch}{1.08}

\resizebox{\linewidth}{!}{%
\begin{tabular}{@{}rrrrrrrrr@{}}
\hline
& \multicolumn{4}{c}{Mean error}
& \multicolumn{4}{c}{Variance error} \\
\cline{2-5}\cline{6-9}
\(N_{\mathrm{MC}}\)
& \(E_N^{\mathrm{tot}}\)
& \(r^{\mathrm{tot}}_{100\to N}\)
& \(E_N^{\mathrm{ex}}\)
& \(r^{\mathrm{ex}}_{100\to N}\)
& \(E_N^{\mathrm{tot}}\)
& \(r^{\mathrm{tot}}_{100\to N}\)
& \(E_N^{\mathrm{ex}}\)
& \(r^{\mathrm{ex}}_{100\to N}\) \\
\hline
100
& \(3.2621\times10^{-3}\)
& --
& \(3.2621\times10^{-3}\)
& --
& \(9.9018\times10^{-3}\)
& --
& \(9.9018\times10^{-3}\)
& -- \\
250
& \(6.5449\times10^{-3}\)
& \(-0.7599\)
& \(6.5449\times10^{-3}\)
& \(-0.7599\)
& \(2.2713\times10^{-3}\)
& \(1.6069\)
& \(2.2713\times10^{-3}\)
& \(1.6069\) \\
500
& \(3.2110\times10^{-3}\)
& \(0.0098\)
& \(3.2110\times10^{-3}\)
& \(0.0098\)
& \(1.5289\times10^{-3}\)
& \(1.1608\)
& \(1.5289\times10^{-3}\)
& \(1.1608\) \\
750
& \(1.2228\times10^{-3}\)
& \(0.4870\)
& \(1.2228\times10^{-3}\)
& \(0.4870\)
& \(1.9128\times10^{-3}\)
& \(0.8160\)
& \(1.9128\times10^{-3}\)
& \(0.8160\) \\
1000
& \(1.1219\times10^{-3}\)
& \(0.4636\)
& \(1.1219\times10^{-3}\)
& \(0.4636\)
& \(8.9699\times10^{-4}\)
& \(1.0429\)
& \(8.9699\times10^{-4}\)
& \(1.0429\) \\
2000
& \(2.3828\times10^{-3}\)
& \(0.1048\)
& \(2.3828\times10^{-3}\)
& \(0.1048\)
& \(1.8082\times10^{-3}\)
& \(0.5676\)
& \(1.8082\times10^{-3}\)
& \(0.5676\) \\
3000
& \(7.3660\times10^{-4}\)
& \(0.4375\)
& \(7.3660\times10^{-4}\)
& \(0.4375\)
& \(9.1144\times10^{-4}\)
& \(0.7014\)
& \(9.1144\times10^{-4}\)
& \(0.7014\) \\
4000
& \(5.5469\times10^{-4}\)
& \(0.4803\)
& \(5.5469\times10^{-4}\)
& \(0.4803\)
& \(6.4243\times10^{-4}\)
& \(0.7415\)
& \(6.4243\times10^{-4}\)
& \(0.7415\) \\
5000
& \(4.4384\times10^{-4}\)
& \(0.5099\)
& \(4.4384\times10^{-4}\)
& \(0.5099\)
& \(9.4659\times10^{-5}\)
& \(1.1887\)
& \(9.4660\times10^{-5}\)
& \(1.1887\) \\
\hline
\end{tabular}
}
\end{table}

We first read Table~\ref{tab:mc-total-exact-mean-var-rootN} as a sampling-rate diagnostic on the finest deterministic grid, \(h=\tau=0.03125\). The table compares the total Monte-Carlo error with the pure exact-sampling error at the same sample milestones. The two columns agree to the displayed precision, so the deterministic space-time error is not visible at this refinement level; the measured error is governed by sampling.

The following observations are relevant for the mean estimator:
\begin{itemize}
  \item The mean error is not monotone over the cumulative sample sequence. It increases from \(3.2621\times10^{-3}\) at \(N_{\mathrm{MC}}=100\) to \(6.5449\times10^{-3}\) at \(N_{\mathrm{MC}}=250\). This gives a negative window rate, but this is a feature of one finite cumulative Monte-Carlo realization, not a contradiction of the root-sampling model.
  \item For the larger windows, the mean error is closer to the expected sampling scale. The observed rates are \(r_{100\to750}^{\mathrm{mean}}=0.4870\) and \(r_{100\to1000}^{\mathrm{mean}}=0.4636\), which are close to the theoretical value \(1/2\).
  \item The final mean error at \(N_{\mathrm{MC}}=1000\) is \(1.1219\times10^{-3}\).
\end{itemize}

For the variance estimator, Table~\ref{tab:mc-total-exact-mean-var-rootN} shows stronger fluctuations:
\begin{itemize}
  \item The variance error decreases from \(9.9018\times10^{-3}\) at \(N_{\mathrm{MC}}=100\) to \(2.2713\times10^{-3}\) at \(N_{\mathrm{MC}}=250\), then to \(1.5289\times10^{-3}\) at \(N_{\mathrm{MC}}=500\), increases again at \(N_{\mathrm{MC}}=750\), and finally reaches \(8.9699\times10^{-4}\) at \(N_{\mathrm{MC}}=1000\).
  \item This stronger oscillation is expected for a variance estimator, since the variance is a second-moment quantity and its statistical error depends on higher moments; see Lemma~\ref{lem:mc-variance-rmse}.
  \item The final variance error at \(N_{\mathrm{MC}}=1000\) is \(8.9699\times10^{-4}\).
\end{itemize}

Overall, Table~\ref{tab:mc-total-exact-mean-var-rootN} confirms that the finest-grid Monte-Carlo errors are sampling errors. The observed rates should therefore be interpreted as finite-window diagnostics of one cumulative sample sequence, not as monotone convergence rates.\\

Table~\ref{tab:mc-disc-error-rates} reports the observed rates for the discretization-only errors under the refinement sequence \(h_\ell\mapsto h_{\ell+1}=h_\ell/2\). These rates describe the deterministic pathwise solver and are independent of the sampling fluctuation in the total Monte-Carlo estimator.\\

\begin{table}
\caption{Observed space-time refinement rates for the discretization-only
Monte-Carlo errors at \(N_{\mathrm{MC}}=1000\). The refinements satisfy
\(h=\tau\), and the rate is computed with respect to
\(h_\ell\mapsto h_{\ell+1}=h_\ell/2\), namely
\(r_{\ell\to \ell+1}=\log_2(E_\ell/E_{\ell+1})\).}
\label{tab:mc-disc-error-rates}

\scriptsize
\setlength{\tabcolsep}{2.8pt}
\renewcommand{\arraystretch}{1.08}

\resizebox{\linewidth}{!}{%
\begin{tabular}{@{}lrrrrrrrrr@{}}
\hline
error quantity
& \(E_1\)
& \(r_{1\to2}\)
& \(E_2\)
& \(r_{2\to3}\)
& \(E_3\)
& \(r_{3\to4}\)
& \(E_4\)
& \(r_{4\to5}\)
& \(E_5\) \\
\hline
\(E_{\mathrm{disc}}^{\mathrm{mean},L^2}\)
& \(3.267518\times10^{3}\)
& \(16.378\)
& \(3.837484\times10^{-2}\)
& \(14.805\)
& \(1.340143\times10^{-6}\)
& \(4.000\)
& \(8.377028\times10^{-8}\)
& \(4.000\)
& \(5.235821\times10^{-9}\) \\
\(E_{\mathrm{disc}}^{\mathrm{mean},H^1}\)
& \(1.116132\times10^{4}\)
& \(15.545\)
& \(2.334227\times10^{-1}\)
& \(13.669\)
& \(1.791646\times10^{-5}\)
& \(3.014\)
& \(2.217301\times10^{-6}\)
& \(3.004\)
& \(2.764635\times10^{-7}\) \\
\(E_{\mathrm{disc}}^{\mathrm{var},L^2}\)
& \(8.092823\times10^{10}\)
& \(31.854\)
& \(2.084483\times10^{1}\)
& \(27.322\)
& \(1.242715\times10^{-7}\)
& \(3.994\)
& \(7.797166\times10^{-9}\)
& \(3.999\)
& \(4.878019\times10^{-10}\) \\
\(E_{\mathrm{disc}}^{\mathrm{var},H^1}\)
& \(3.061790\times10^{11}\)
& \(30.789\)
& \(1.649952\times10^{2}\)
& \(25.524\)
& \(3.418897\times10^{-6}\)
& \(3.003\)
& \(4.263293\times10^{-7}\)
& \(3.001\)
& \(5.325844\times10^{-8}\) \\
\hline
\end{tabular}
}
\end{table}

From Table~\ref{tab:mc-disc-error-rates} we observe the deterministic convergence pattern of the pathwise solver:
\begin{itemize}
  \item For the \(L^2_t(L^2_x)\)-type quantities, the last two displayed rates are essentially fourth order. The mean discretization-only rates are \(4.000\) and \(4.000\), and the variance discretization-only rates are \(3.994\) and \(3.999\).
  \item For the \(L^2_t(H^1_x)\)-seminorm quantities, the last two displayed rates are essentially third order. The mean rates are \(3.014\) and \(3.004\), and the variance rates are \(3.003\) and \(3.001\).
\end{itemize}

From Table \ref{tab:mc-disc-error-rates}, we conclude that reducing \(h\) and \(\tau\) is effective for reducing the pathwise discretization error, but it does not reduce the total Monte-Carlo error once the finite-sample contribution dominates. In that regime, improving the total Monte-Carlo accuracy requires increasing \(N_{\mathrm{MC}}\).

\begin{figure}[tbp]
\centering
\includegraphics[width=0.7\linewidth]{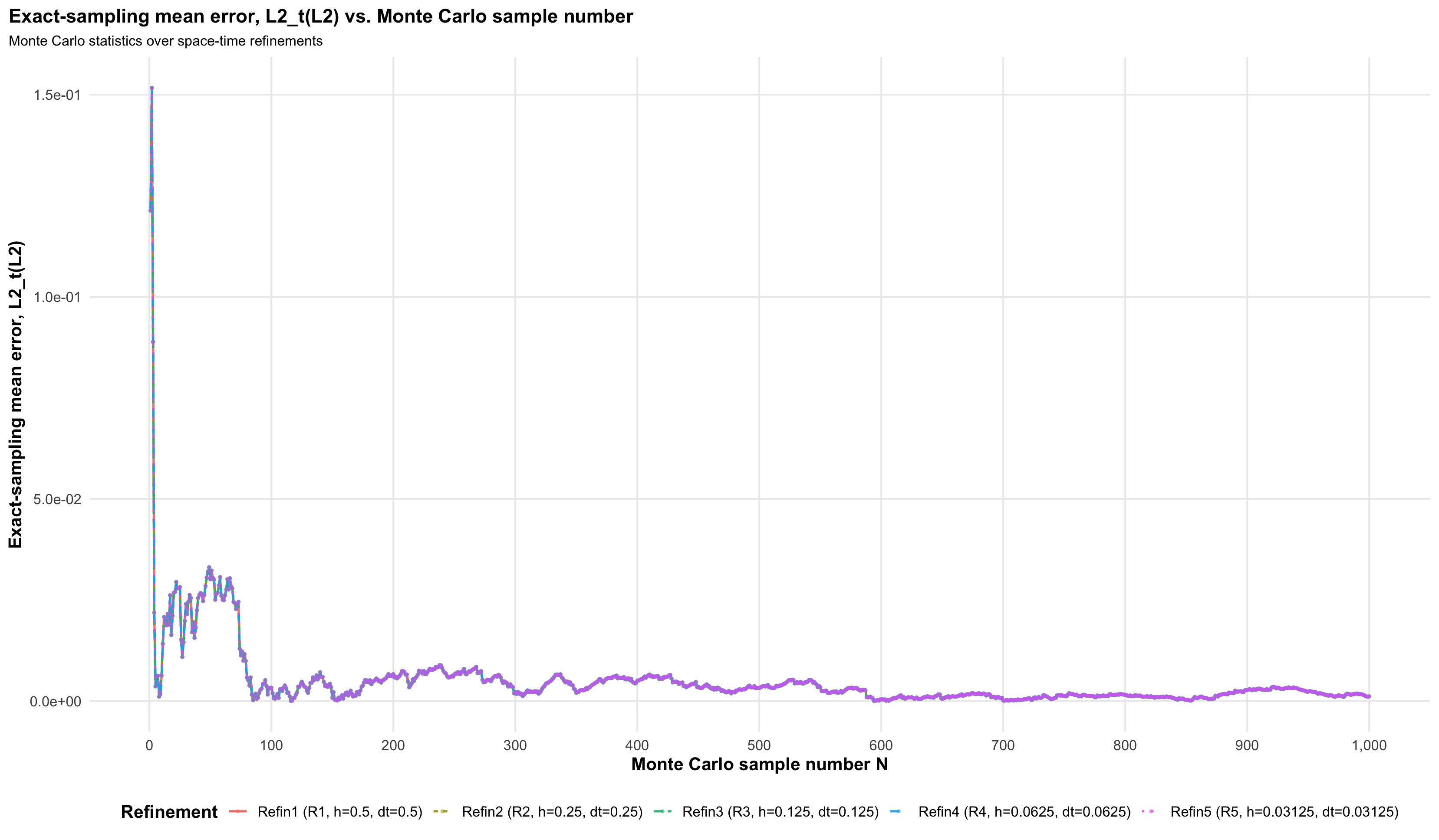}
\caption{Monte-Carlo mean-error diagnostics as a function of the sample number. The curves illustrate the non-monotone but root-sampling-consistent behaviour of a single cumulative Monte-Carlo realization.}
\label{fig:mc-mean-diagnostics}
\end{figure}
\begin{figure}[tbp]
\centering
\includegraphics[width=0.7\linewidth]{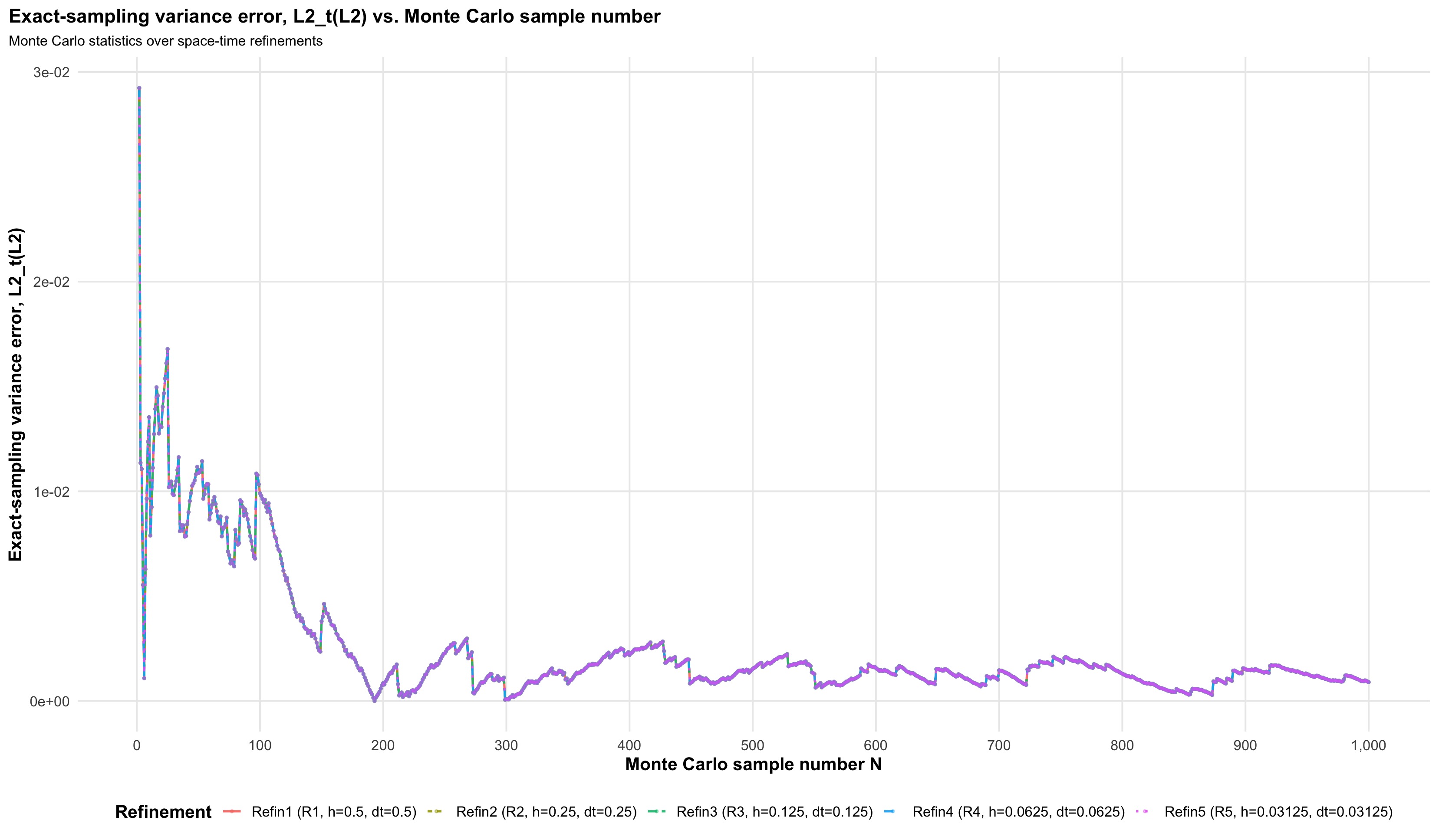}
\caption{Monte-Carlo variance-error diagnostics as a function of the sample number. The variance estimator is more oscillatory than the mean estimator, as expected for a second-moment quantity.}
\label{fig:mc-variance-diagnostics}
\end{figure}

\section{Conclusion: Comparison between stochastic Galerkin and Monte-Carlo methods}
\label{sec:comparison}

We finally compare the stochastic Galerkin and Monte-Carlo results in terms of work and accuracy. The comparison uses the updated data in Table~\ref{tab:mc-sg-complexity-accuracy-walltime} and the combined work-accuracy  plot in Figure~\ref{fig:mc-sg-mean-var-work-accuracy}. The table summarizes selected SG results from stochastic refinement at fixed \(h=\tau=0.03125\), together with space-time refinement at fixed \(p_\xi=5\). The table also includes Monte-Carlo computations with \(N_{\mathrm{MC}}=1000\) samples on several deterministic space-time refinement levels, and an additional experiment with \(N_{\mathrm{MC}}=5000\) computation at \(h=\tau=0.125\).

The figure additionally includes the stochastic-refinement curve at fixed \(h=\tau=0.015625\), so that the two deterministic SG resolutions \(h=\tau=0.03125\) and \(h=\tau=0.015625\) can be compared in the work-accuracy  plane. The plotted Monte-Carlo curves show cumulative sample-count milestones, including the extended \(N_{\mathrm{MC}}=5000\) sequence at \(h=\tau=0.125\). In the plotted SG stochastic-refinement curves, the point \(p_\xi=6\) is omitted in order to focus on the pre-capture behaviour, while Table~\ref{tab:mc-sg-complexity-accuracy-walltime} still reports the \(p_\xi=6=q\) finite-chaos capture row.

All Monte-Carlo and stochastic Galerkin computations reported in this comparison were carried out on the \textbf{HSUper} high-performance computing system at Helmut Schmidt University. The parallel runs used \(5\) compute nodes with \(72\) MPI ranks per node, hence \(360\) MPI ranks in total. For Monte-Carlo, the reported time is the accumulated pathwise wall time \(T_{\mathrm{MC}}^{\mathrm{acc}} = \sum_{m=1}^{N_{\mathrm{MC}}} t_m\), which is used here as the work measure for comparison with the SG wall-clock timings.

\begin{table}
\caption{Comparison of stochastic Galerkin and Monte-Carlo methods for the finite-order prescribed benchmark. The stochastic Galerkin data include both stochastic refinement at fixed \(h=\tau=0.03125\) and space-time refinement at fixed \(p_\xi=5\). The Monte-Carlo rows use \(N_{\mathrm{MC}}=1000\) samples with deterministic space-time refinements \(h=\tau\in\{0.25,0.125,0.0625,0.03125\}\). In addition, the table includes the new \(N_{\mathrm{MC}}=5000\) Monte-Carlo result at \(h=\tau=0.125\). For Monte-Carlo, the reported wall time is the accumulated pathwise wall time
\(T_{\mathrm{MC}}^{\mathrm{acc}}=\sum_{m=1}^{N_{\mathrm{MC}}}t_m\).}
\label{tab:mc-sg-complexity-accuracy-walltime}

\scriptsize
\setlength{\tabcolsep}{2.4pt}
\renewcommand{\arraystretch}{1.12}

\resizebox{\linewidth}{!}{%
\begin{tabular}{@{}llrrrlp{6.2cm}@{}}
\hline
Method
& Resolution
& stochastic size
& \(E^{\mathrm{mean},L^2}\)
& \(E^{\mathrm{var},L^2}\)
& wall time [s]
& main observation \\
\hline
SG
& \(p_\xi=2,\ h=\tau=0.03125\)
& \(N_\xi=15\)
& \(3.77096\times10^{-4}\)
& \(2.97674\times10^{-3}\)
& \(2.67420\times10^{2}\)
& cheapest listed SG stochastic-refinement row; mean and variance remain truncation-influenced. \\

&
\(p_\xi=3,\ h=\tau=0.03125\)
& \(N_\xi=35\)
& \(3.77096\times10^{-4}\)
& \(3.06690\times10^{-4}\)
& \(7.32458\times10^{2}\)
& variance improves substantially; mean remains unchanged from \(p_\xi=2\). \\

&
\(p_\xi=5,\ h=\tau=0.25\)
& \(N_\xi=126\)
& \(3.15222\times10^{-5}\)
& \(1.58571\times10^{-5}\)
& \(9.37018\times10^{1}\)
& coarsest listed \(p_\xi=5\) space-time row; errors already near the \(p_\xi=5\) stochastic plateau. \\

&
\(p_\xi=5,\ h=\tau=0.125\)
& \(N_\xi=126\)
& \(2.32371\times10^{-5}\)
& \(1.45225\times10^{-5}\)
& \(3.30547\times10^{2}\)
& moderate deterministic refinement; mean and variance approach the \(p_\xi=5\) plateau. \\

&
\(p_\xi=5,\ h=\tau=0.0625\)
& \(N_\xi=126\)
& \(2.31988\times10^{-5}\)
& \(1.45194\times10^{-5}\)
& \(1.21895\times10^{3}\)
& further deterministic refinement gives almost no additional error reduction. \\

&
\(p_\xi=5,\ h=\tau=0.03125\)
& \(N_\xi=126\)
& \(2.31986\times10^{-5}\)
& \(1.45194\times10^{-5}\)
& \(3.45131\times10^{3}\)
& finest listed \(p_\xi=5\) grid remains limited by stochastic truncation. \\

&
\(p_\xi=6=q,\ h=\tau=0.03125\)
& \(N_\xi=210\)
& \(5.20063\times10^{-9}\)
& \(1.84941\times10^{-9}\)
& \(6.44612\times10^{3}\)
& finite chaos fully resolved at this deterministic resolution. \\
\hline
MC
& \(h=\tau=0.25\)
& \(N_{\mathrm{MC}}=1000\)
& \(1.122116\times10^{-3}\)
& \(8.971187\times10^{-4}\)
& \(7.14865\times10^{2}\)
& pathwise discretization sufficiently resolved. \\

&
\(h=\tau=0.125\)
& \(N_{\mathrm{MC}}=1000\)
& \(1.121870\times10^{-3}\)
& \(8.969912\times10^{-4}\)
& \(2.86807\times10^{3}\)
& total error unchanged under deterministic refinement. \\

&
\(h=\tau=0.0625\)
& \(N_{\mathrm{MC}}=1000\)
& \(1.121869\times10^{-3}\)
& \(8.969901\times10^{-4}\)
& \(1.51200\times10^{4}\)
& higher deterministic cost with no total-error gain. \\

&
\(h=\tau=0.03125\)
& \(N_{\mathrm{MC}}=1000\)
& \(1.121869\times10^{-3}\)
& \(8.969901\times10^{-4}\)
& \(8.18684\times10^{4}\)
& finest grid remains sampling-limited. \\

&
\(h=\tau=0.125\)
& \(N_{\mathrm{MC}}=5000\)
& \(4.438425\times10^{-4}\)
& \(9.465881\times10^{-5}\)
& \(1.30985\times10^{4}\)
& fivefold sampling reduces the mean error by about \(2.5\) and the variance error by about \(9.5\). \\
\hline
\end{tabular}
}
\end{table}

\paragraph{Observations from Table~\ref{tab:mc-sg-complexity-accuracy-walltime}.}
Table~\ref{tab:mc-sg-complexity-accuracy-walltime} separates three effects: stochastic truncation in the SG chaos space, deterministic space-time discretization, and finite-sample error in Monte-Carlo.

\begin{itemize}
  \item
  In the SG stochastic-refinement rows at fixed \(h=\tau=0.03125\), increasing the chaos degree reduces the variance error very strongly. The variance error decreases from \(2.97674\times10^{-3}\) at \(p_\xi=2\) to \(3.06690\times10^{-4}\) at \(p_\xi=3\), and then to \(1.84941\times10^{-9}\) at \(p_\xi=6=q\). The last value corresponds to the finite-chaos capture point at this deterministic resolution.

  \item
  The SG mean error does not decrease at every stochastic-refinement step. In particular, the mean error is the same at \(p_\xi=2\) and \(p_\xi=3\), namely \(3.77096\times10^{-4}\). This behaviour follows from the modal structure of the coupled SG system. The mean is the zeroth chaos coefficient, but its discrete equation is coupled to higher stochastic modes through the random diffusion coefficient. Since the diffusion coefficient is generated by a squared Gaussian expression, the coupling has an even-mode structure. Hence, some stochastic enrichments improve the variance substantially while leaving the mean essentially unchanged.

  \item
  In the SG space-time-refinement rows at fixed \(p_\xi=5<q\), refining \(h=\tau\) from \(0.25\) to \(0.03125\) reduces the mean error from \(3.15222\times10^{-5}\) to \(2.31986\times10^{-5}\). The variance error decreases from \(1.58571\times10^{-5}\) to \(1.45194\times10^{-5}\). Thus, both quantities quickly reach a \(p_\xi=5\) stochastic-truncation plateau. Further deterministic refinement cannot remove the remaining error caused by the unresolved \(p_\xi=6\) stochastic modes.

  \item
  In the \(N_{\mathrm{MC}}=1000\) rows, refining the pathwise grid from \(h=\tau=0.25\) to \(h=\tau=0.03125\) increases the accumulated wall time from \(7.14865\times10^{2}\) seconds to \(8.18684\times10^{4}\) seconds. However, the total mean error remains approximately \(1.12\times10^{-3}\), and the total variance error remains approximately \(8.97\times10^{-4}\). Thus, for \(N_{\mathrm{MC}}=1000\), the displayed MC errors are sampling-limited rather than pathwise-discretization-limited.

  \item
  The additional \(N_{\mathrm{MC}}=5000\) run at \(h=\tau=0.125\) confirms this interpretation. Keeping the deterministic discretization fixed and increasing the sample number from \(1000\) to \(5000\) reduces the mean error from \(1.121870\times10^{-3}\) to \(4.438425\times10^{-4}\). This is an improvement factor of approximately \(2.53\), which is close to the expected Monte-Carlo factor \(\sqrt{5000/1000}\approx2.24\). The variance error decreases from \(8.969912\times10^{-4}\) to \(9.465881\times10^{-5}\), corresponding to an improvement factor of approximately \(9.48\). This stronger variance reduction is favourable for this particular sample realization, but the systematic conclusion remains that increasing \(N_{\mathrm{MC}}\), not refining \(h\) and \(\tau\), is the effective way to reduce the MC error once the pathwise discretization is sufficiently resolved.

 \item Consequently, an efficient Monte-Carlo workflow should first determine the coarsest space-time resolution at which the total MC mean and variance errors, defined in \eqref{eq:mc-error-mean-L2,mc-error-var-L2}, become sampling-dominated. This can be done by running a few deterministic refinements with a moderate sample number, say \(N_{\mathrm{MC}}=500\) or \(N_{\mathrm{MC}}=1000\). Once further refinement
  of \(h\) and \(\tau\) no longer changes the total errors appreciably, the pathwise discretization is sufficiently resolved. Beyond this point, the computational budget should be invested in increasing \(N_{\mathrm{MC}}\), since this is the only effective way to reduce the remaining sampling error.
\end{itemize}

\begin{figure}[tbp]
\centering
\includegraphics[width=0.9\linewidth]{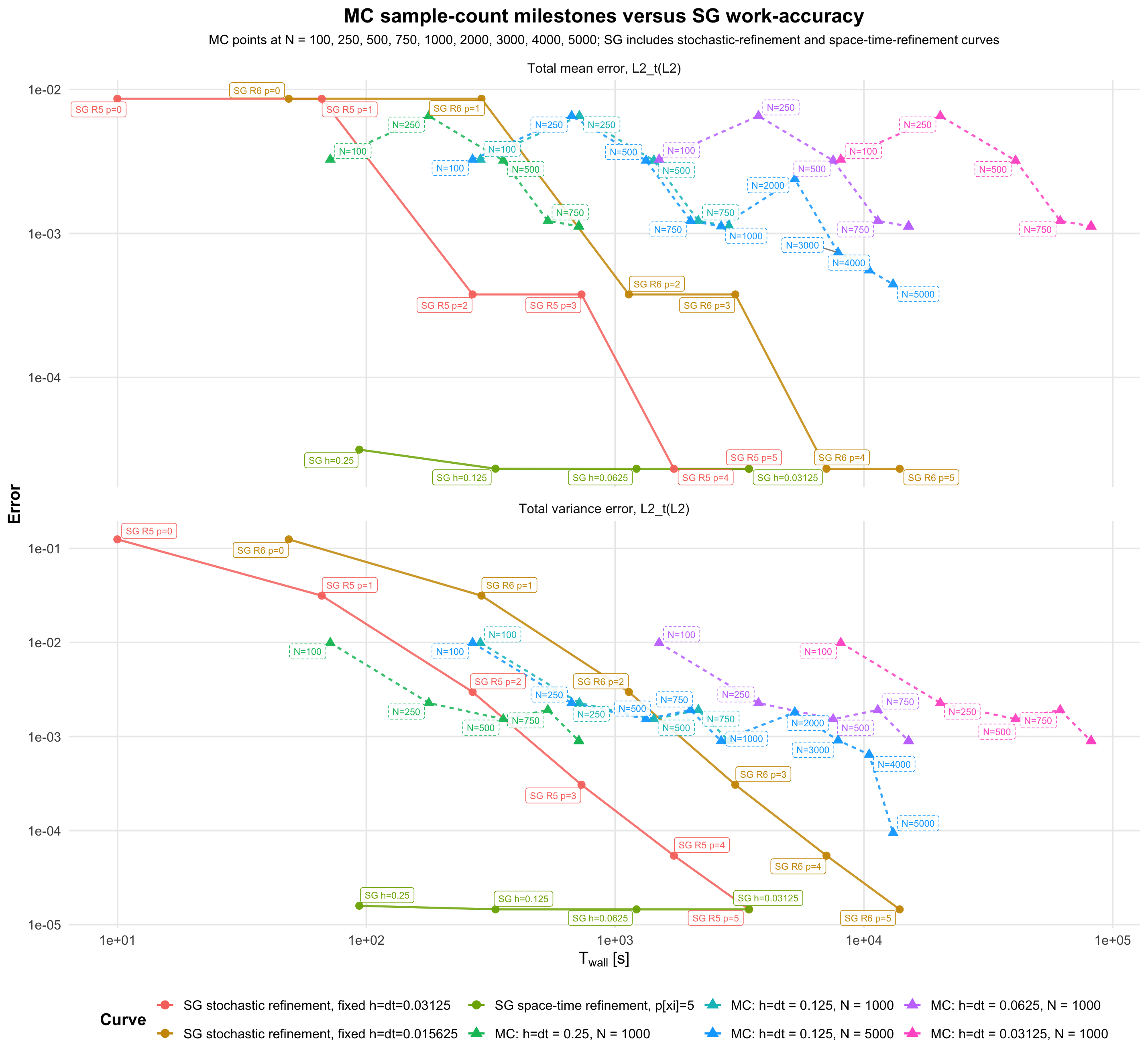}
\caption{Combined work-accuracy  comparison for mean and variance errors. The upper panel shows the total mean error in \(L_t^2(L^2)\), and the lower panel shows the total variance error in \(L_t^2(L^2)\). The Monte-Carlo curves show cumulative estimators at the sample milestones \(N=100,200,400,600,800,1000\). The SG curves show stochastic refinement at fixed \(h=\tau=0.03125\), stochastic refinement at fixed \(h=\tau=0.015625\), and space-time refinement at fixed \(p_\xi=5\). The \(p_\xi=6\) SG capture points are not included in this plot.}
\label{fig:mc-sg-mean-var-work-accuracy}
\end{figure}

\paragraph{Observations from Figure~\ref{fig:mc-sg-mean-var-work-accuracy}.}
Figure~\ref{fig:mc-sg-mean-var-work-accuracy} visualizes the same conclusions as Table~\ref{tab:mc-sg-complexity-accuracy-walltime}.

\begin{itemize}
  \item
  In the upper panel, the \(N_{\mathrm{MC}}=1000\) MC curves remain in a sampling-error band of roughly \(10^{-3}\) to \(10^{-2}\). Increasing the pathwise resolution changes the accumulated wall time, but it does not systematically reduce the displayed MC mean error at the shown sample milestones. This confirms that deterministic refinement mainly shifts the MC points to the right in the work-accuracy  plane.

  \item
  The extended \(N_{\mathrm{MC}}=5000\) curve at \(h=\tau=0.125\) shows the expected effect of increasing the sample number. The final \(N=5000\) point lies below the corresponding \(N=1000\) point for the same deterministic refinement, reducing the mean error from about \(1.12\times10^{-3}\) to about \(4.44\times10^{-4}\). Thus, the figure confirms visually that the MC improvement comes from increasing \(N_{\mathrm{MC}}\), not from further pathwise refinement.

  \item
  The SG stochastic-refinement curves show a more structured error reduction. For both deterministic resolutions, the mean error is nearly unchanged from \(p_\xi=0\) to \(p_\xi=1\), drops at \(p_\xi=2\), remains almost unchanged at \(p_\xi=3\), and drops again at \(p_\xi=4\) and \(p_\xi=5\). This pairwise behaviour is consistent with the even-mode coupling induced by the squared Gaussian diffusion coefficient.

  \item
  The SG space-time-refinement curve at fixed \(p_\xi=5\) is almost horizontal in both panels. This confirms that, at \(p_\xi=5<q\), the error is already dominated by stochastic truncation. Deterministic refinement alone therefore cannot significantly reduce either the mean or the variance error.

  \item
  In the lower panel, the variance error decreases more visibly under stochastic refinement. The SG stochastic-refinement curves reduce the variance error by several orders of magnitude between \(p_\xi=0\) and \(p_\xi=5\). The fixed-\(p_\xi=5\) space-time curve, however, remains near \(1.45\times10^{-5}\), again indicating the stochastic-truncation plateau.

  \item
  The \(N_{\mathrm{MC}}=5000\) Monte-Carlo curve also shows a clear variance-error reduction compared with the \(N_{\mathrm{MC}}=1000\) endpoint at the same deterministic resolution. Nevertheless, even after increasing the sample count to \(5000\), the MC errors remain above the best listed SG errors. The figure therefore illustrates the central difference between the two methods: MC convergence is governed by the slow sampling rate, whereas SG can exploit the finite polynomial stochastic structure of the prescribed analytical solution.
\end{itemize}

Overall, for this finite-order benchmark, stochastic Galerkin provides a more systematic and controllable convergence mechanism for both the mean and the variance as the Hermite chaos space is enriched. At fixed \(p_\xi=5<q\), both SG mean and variance errors reach a stochastic-truncation plateau. When the chaos degree is increased to \(p_\xi=6=q\), as reported in Table~\ref{tab:mc-sg-complexity-accuracy-walltime}, the finite Hermite chaos is fully resolved and the remaining SG error is reduced to the deterministic/algebraic floor. Monte-Carlo remains a robust non-intrusive reference method, but for the present runs with \(N_{\mathrm{MC}}=1000\), its total error is dominated by finite-sample effects and therefore does not improve under further pathwise space-time refinement.

\section*{Acknowledgment}

Computational resources (HPC-cluster HSUper) and funding of M.\ Dawor have been provided by the project hpc.bw of the cluster dtec.bw --- Digitalization and Technology Research Center of the Bundeswehr. dtec.bw is funded by the European Union ---  NextGenerationEU.

\end{document}